\newcommand{\R}{\mathbb{R}}
\newcommand{\Z}{\mathbb{Z}}
\newcommand{\N}{\mathbb{N}}
\newcommand{\D}{\mathcal{D}} 
\newcommand{\abs}[1]{\left\lvert#1\right\rvert}
\newcommand{\T}{\prime} 
\newcommand{\up}[1]{{(#1)}}
\newcommand{\e}[1]{e^{\up{#1}}}
\newcommand{\E}{\mathbb{E}}
\newcommand{\J}{\mathcal{J}} 
\renewcommand{\b}[1]{u}
\newcommand{\bb}[2]{u_{#2}}
\newcommand{\ww}[2]{w_{#1#2}}
\newcommand{\route}[1]{\phi_{#1}}
\newcommand{\Route}[2]{\phi_{#1#2}}
\newcommand{\cc}{r} 
\newcommand{\uu}{{\up{\cc}}} 
\newcommand{\fk}{\up{\kappa}}
\newcommand{\I}[1]{\mathbbm{1}{\left( #1 \right)}} 
\newcommand{\II}[1]{\mathbbm{1}{( #1 )}} 
\newcommand{\A}{\mathcal{A}}
\newtheorem{lemma}{Lemma}
\newtheorem{proposition}{Proposition}
\newtheorem{theorem}{Theorem}
\theoremstyle{definition}
\newtheorem{assumption}{Assumption}
\numberwithin{equation}{section} 
\title{Uniform Moment Bounds for Generalized Jackson Networks in Multi-scale Heavy Traffic}
\author{Jin Guang$^1$\\{\small \href{mailto:jinguang@link.cuhk.edu.cn}{jinguang@link.cuhk.edu.cn}} \and Xinyun Chen$^1$\\{\small \href{mailto:chenxinyun@cuhk.edu.cn}{chenxinyun@cuhk.edu.cn}} \and J. G. Dai$^2$\\ {\small \href{mailto:jd694@cornell.edu}{jd694@cornell.edu}}}
\date{{$^1$School of Data Science, The Chinese University of Hong Kong, Shenzhen, China\\%
$^2$School of Operations Research and Information Engineering, Cornell University, Ithaca, New York}} 
\begin{document}

\maketitle
\begin{abstract}
We establish uniform moment bounds for steady-state queue lengths of generalized Jackson networks (GJNs) in multi-scale heavy traffic as recently proposed by \citet{DaiGlynXu2023}. Uniform moment bounds lay the foundation for further analysis of the limit stationary distribution. Our result can be used to verify the crucial moment state space collapse (SSC) assumption in \citet{DaiGlynXu2023} to establish a product-form limit of GJN in the multi-scale heavy traffic regime. Our proof critically utilizes the Palm version of the basic adjoint relationship (BAR) as developed in \cite{BravDaiMiya2023}. 
\end{abstract}

\section{Introduction}
The steady-state queue lengths of a Jackson network with Poisson arrival processes and exponential service times exhibit an attractive product-form distribution \citep{Jack1957,Jack1963}. When interarrival and service time distributions are general, the network is referred to as a generalized Jackson network (GJN). However, the stationary queue length distribution of a GJN does not have an analytic expression in general. In light of this, numerous studies have developed approximations for the stationary distribution of GJNs under certain heavy traffic conditions. \citet{GamaZeev2006} and \citet{BudhLee2009} proved  that, under certain moment-bound conditions on interarrival and service times, the stationary distribution of the GJN weakly converges to that of a semimartingale reflecting Brownian motion (SRBM) when the traffic intensity of each station approaches unity at the same rate. Numerical algorithms for computing the stationary distribution of an SRBM were investigated in \citet{DaiHarr1992} and \citet{ShenChenDaiDai2002}. Recently, \citet{DaiGlynXu2023} introduced the concept of multi-scale heavy traffic, where the traffic intensities of stations approach unity at different rates, resulting in the scaled queue length having a product-form limit for stationary distribution.

In this paper, we show that in multi-scale heavy traffic, the moments of scaled steady-state queue length are uniformly bounded, which verifies the crucial moment state space collapse (SSC) assumption in \citet{DaiGlynXu2023}. In detail, we consider a family of GJNs indexed by $r\in (0,1)$ with $J$ service stations. Under the multi-scale heavy traffic condition, the traffic intensity of station $j$ in the $r$th network is $1-r^j$ for $j=1,\ldots, J$, indicating that different stations approach heavy traffic at different rates. We prove that if interarrival and service times have finite $(M+1)$th moments with $M\geq J$, then the steady-state queue lengths of this family of GJNs are uniformly bounded:
\begin{equation}\label{eq: result}
    \sup_{r\in (0,r_0)} \E\left[\left( r^j Z^{(r)}_j(\infty) \right)^M \right] < \infty, \quad {\text{ for } j=1,\ldots,J.}
\end{equation}
where $r_0$ is a constant in $(0,1)$, and $Z^{(r)}_j(\infty)$ denotes the steady-state queue length at station $j$ in the $r$th network.

Uniform moment bound results, such as \eqref{eq: result}, are essential building blocks in the more sophisticated analysis of the limit stationary distributions of queueing networks in heavy traffic. Assuming exponential integrability for interarrival and service times, \citet{GamaZeev2006} established uniform exponential bounds for the scaled queue length process so that the limit stationary distribution of GJNs in heavy traffic can be characterized via interchange of limits. In \citet{BudhLee2009}, a similar result was obtained by proving uniform moment bounds for the scaled queue length process under a relaxed polynomial integrability assumption. In a recent paper, \citet{BravDaiMiya2023} investigated the limit stationary distribution of multi-class queueing networks under SBP service policies based on the uniform moment bounds established in \citet{CaoDaiZhan2022}. In these works, the uniform moment bounds are proved via the Lyapunov function method introduced in \citet{Dai1995} and \citet{MeynTwee2009}. 

In this paper, we propose a new approach to prove uniform moment bounds by utilizing the basic adjoint relationship (BAR) of the underlying GJNs. A significant benefit of the BAR approach is that it directly characterizes the stationary distribution of queueing networks, eliminating the need to address their transient dynamics.
In a recent series of papers (\citet{BravDaiMiya2017}, \citet{BravDaiMiya2023} and \cite{DaiGlynXu2023}), the authors utilized BAR to establish weak convergence results for GJNs and multi-class queueing networks. The specific BAR equation employed in our proof {was first} derived from \citet{BravDaiMiya2017} for GJN,  with the Palm version further developed in \cite{BravDaiMiya2023}. We believe that our approach can also establish moment bounds for other queueing network models in heavy traffic by utilizing the corresponding BAR equations.

The paper is organized as follows. In Section \ref{subsec:setting}, we review the GJN model and its basic theory and introduce the multi-scale heavy traffic setting; in Section \ref{sec:moment}, we present the main result (Theorem \ref{thm:Ckn}), which establishes a uniform moment bound for the steady-state queue length at different stations under certain moment conditions on the interarrival and service time distributions. The proof of Theorem \ref{thm:Ckn} is based on BAR, which is reviewed in Section \ref{subsec: BAR}. In Section \ref{subsec: test functions }, we describe the proof sketch based on mathematical induction and specify the test functions applied to BAR in the proofs. The detailed proof is presented in Section \ref{sec:proof}.

\section{Problem Setting and Main Results}

\subsection{Generalized Jackson Network and Multi-Scale Heavy Traffic}\label{subsec:setting}
We use $\N$ to denote the set of positive integers $\{1,2,\ldots\}$. For $a,b\in \R$, let $a\wedge b\equiv \min(a,b)$ and $a\vee b\equiv \max(a,b)$. Additionally, $\e{k}$ denotes a $J$-dimensional vector where the $k$th element is $1$ and all other elements are $0$. However, $\e{0}$ represents a $J$-dimensional vector where all elements are $0$.

We consider an open generalized Jackson network (GJN) with $J$ service stations. Each station has a single server that processes jobs one at a time, and it has a buffer with unlimited capacity that holds waiting jobs. Each station receives jobs from an external arrival source (possibly null) and service completions at other stations. When an arriving job at a station finds the server busy, the job waits in the buffer.  Jobs at each station are processed following the first-come-first-serve (FCFS) discipline. When the processing of a job is completed at a station, the job is routed to another station or exits the network. All jobs visiting a station are homogeneous regarding service time distribution and routing probabilities.

Let $\J=\{1, \ldots, J\}$ denote the set of stations. Associated with each station $j\in \J$, there are two i.i.d.~sequences of random variables, $\{T_{e,j}(i);i\in \N\}$ and $\{T_{s,j}(i);i\in \N\}$, two real numbers, $\alpha_j\ge 0$ and $\mu_j>0$, and an i.i.d.~sequence of random vectors $\{\route{j}(i), i\in\N\}$, defined on a common probability space $(\Omega, \mathscr{F}, \mathbb{P})$.
We assume that the $3J$ sequences 
\begin{equation}  \label{eq: 3J sequence}
    \{T_{e,j}(i);i\in \mathbb{N}\}_{j=1}^J,\qquad \{T_{s,j}(i);i\in \mathbb{N}\}_{j=1}^J,\qquad \{\route{j}(i), i\in\N\}_{j=1}^J,
\end{equation}
are independent, and the first $2J$ sequences are unitized, that is, $\E[T_{e,j}{(1)}]=1$ and $\E[T_{s,j}{(1)}]=1$ for all $j\in \J$. 
For each $i\in \N$, $T_{e,j}(i)/\alpha_j$ denotes the interarrival time between the $i$th and $(i+1)$th externally arriving jobs to station $j$, and $T_{s,j}(i)/\mu_j$ stands for the service time of the $i$th job at station $j$. Accordingly, $\alpha_j$ is the external arrival rate, and $\mu_j$ is the service rate at station $j$. The random vector $\route{j}(i)$ represents the routing decision of the $i$th job at station $j$ following the routing probability matrix $P$. Specifically, the job is routed to station $k$ if $\route{j}(i)=\e{k}$ with probability $P_{jk}$, and exits the network if $\route{j}(i)=\e{0}$ with probability $P_{j0}\equiv 1-\sum_{k\in \J}P_{jk}$. We assume that this network is open, i.e., the routing matrix $P$ is assumed to be transient, or equivalently, $I-P$ is invertible, where $I$ is the identity matrix.

\paragraph{Markov process.} A GJN can be modeled as a Markov process as follows. For time $t \geq 0$, we denote by $Z_{j}(t)$ the number of jobs at station $j$, including possibly the one in service. Let $R_{e, j}(t)$ be the residual time until the next external arrival to station $j$, and $R_{s, j}(t)$ be the residual service time for the job being processed at station $j$ if $Z_j(t) > 0$ or the service time of the next job to be processed at station $j$ if $Z_j(t) = 0$. We write $Z(t)$, $R_{e}(t)$ and $R_{s}(t)$ for $J$-dimensional random vectors whose $j$th element are $Z_{j}(t)$, $R_{e, j}(t)$ and $R_{s, j}(t)$, respectively. For any $t\geq 0$, set
\begin{equation*}
    X(t)=\big( Z(t),  R_{e}(t),  R_{s}(t)\big).
\end{equation*}
Then $\{X(t) , t \geq 0\}$ is a Markov process with respect to the filtration $\mathbb{F}^{X} \equiv \left\{\mathcal{F}_{t}^{X} ; t \geq 0\right\}$ defined on the state space $\mathbb{S} = \Z_{+}^{J} \times \R_{+}^{J} \times \R_{+}^{J}$, where $\mathcal{F}_{t}^{X}=\sigma(\{ X(s) ; 0 \leq s \leq t\})$. We assume that each sample path of $\{X(t) , t \geq 0\}$ is right-continuous and has left limits. 

\paragraph{Traffic equation.} Let $\{\lambda_{j}, j\in \J\}$ be the solution to the traffic equations:
\begin{equation}\label{eq:traffic}
    \lambda_{j}=\alpha_{j}+\sum_{\ell\in \J} \lambda_\ell P_{\ell j}, \qquad \forall~j\in \J.
\end{equation}
For each $j\in\J$, $\lambda_j$ is referred to as the nominal total arrival rate to station $j$, which consists of both external arrivals and internal arrivals from other stations. 
The traffic intensity at station $j$ is given by $\rho_j\equiv{\lambda_j}/{\mu_j}$. 

\paragraph{Multi-scale heavy traffic.} Following \citet{DaiGlynXu2023},  we consider a sequence of GJNs indexed by $\cc \in(0,1)$, and denote by $\{X^\uu(t), t \geq 0\}$ the corresponding Markov process. We write $\mu_j^\uu$ for the service rate at station $j$ in the $\cc$th GJN. Note that the service rates are the only model parameters dependent on $\cc$. All other parameters are assumed to be independent of $\cc$, including the external arrival rates denoted by $\{\alpha_j\}_{j=1}^J$, unitized interarrival and service times, as well as the routing vectors specified in \eqref{eq: 3J sequence}.

\begin{assumption}[Multi-scale heavy traffic] \label{assmpt: multiscale}
    We assume that for all $r\in (0,1)$
    \begin{equation*}
        \mu_j^\uu - \lambda_j = \cc^j, \quad \forall~j\in \J.
    \end{equation*}
\end{assumption}
Under Assumption \ref{assmpt: multiscale}, $\rho^{(r)}_j\equiv \lambda_j/\mu^{(r) }_j \to 1$ for all $j\in \J$. In other words, each station is in heavy traffic when $r\to 0$. But different stations approach heavy traffic at different rates. \citet{Dai1995} proves that the following assumption holds under some mild distributional conditions on interarrival time distributions. 

\begin{assumption}[Stability]
    For each $\cc \in(0,1)$, the Markov process $\{X^\uu(t), t \geq 0\}$ is positive Harris recurrent and has a unique stationary distribution $\pi^\uu$.
\end{assumption}
For $r\in(0,1)$, we denote by 
$$X^\uu=\left(Z^\uu,R_{e}^\uu,R_s^\uu\right)$$
the random vector that follows the stationary distribution. To shorten the notation, we use $\E_{\pi}[\cdot]$ (rather than $\E_{\pi^{(r)}}[\cdot]$) to denote expectation with respect to the stationary distribution when the index $r$ is clear from the context.

\subsection{Main Results: Uniform Steady-State Moment Bounds}\label{sec:moment}

Our main result is a uniform bound on the steady-state moment of the queue length in GJNs when the multi-scaling parameter $r$ is small enough, under certain moment conditions on the unitized interarrival and service times.

\begin{theorem}\label{thm:Ckn}
    For a given integer $M\geq 1$, suppose the following moments exist for the unitized times:  
    \begin{equation}\label{eq: moment condtion}
        \E\left[T_{e,j}^{ M+1  }{(1)}\right]<\infty  \text{ for } 1\leq j \leq M \wedge J, \text{ and }~\E\left[T_{s,j}^{ M+1}{(1)}\right]<\infty\text{ for  }1\leq j \leq J.
    \end{equation}
    Then, for each $1\leq k\leq M\wedge J$, there exists a positive constant $C_{k}<\infty$ such that for all $\cc\in(0,r_0)$,
    \begin{equation*} 
        \E_\pi\left[ \left( \cc^k Z_{k}^\uu  \right)^{M}\right] \leq C_{k},
    \end{equation*}
    where $r_0\in (0,1)$ is a constant independent of $M$.
\end{theorem}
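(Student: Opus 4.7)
The plan is to prove the bound by induction on $k$ from $1$ to $M\wedge J$, applying the Palm BAR of \cite{BravDaiMiya2023} to a family of polynomial test functions designed to isolate $\E_\pi[Z_k^{\uu\, M}]$ on one side of the resulting identity modulo lower-order, already-controlled terms. The heavy-traffic balance $\mu_k^\uu-\lambda_k=r^k$ from Assumption \ref{assmpt: multiscale} is the key algebraic identity that exposes the correct $r^k$ scaling in the leading term; the real work is in choosing test functions so that this particular scaling is the one that survives.

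For the base case $k=1$, I would apply BAR with $f(x)=Z_1^{M+1}$, augmented by residual-dependent corrections if needed to make BAR closed. Expanding the Palm contributions from external arrivals, internal feedback, and service completions at station $1$, and using $\mu_1^\uu-\lambda_1=r$, I expect the leading term to take the form $(M+1) r\, \E_\pi[Z_1^{\uu\, M}]$, while the remaining terms are bounded by $C(\E_\pi[Z_1^{\uu\, M-1}]+1)$; here the $(M+1)$th moment conditions on $T_{e,1}$ and $T_{s,j}$ are consumed to control Palm moments of the residuals that appear after Taylor expansion. Iterating the resulting inequality on $M$ gives $\E_\pi[Z_1^{\uu\, M}]\leq C r^{-M}$ uniformly in $r\in(0,r_0)$.

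For the inductive step, assuming the bound for all stations $1,\ldots,k-1$, I would apply BAR with a test function built around $Z_k^{M+1}$, augmented by auxiliary polynomials in $(Z_1,\ldots,Z_{k-1})$ whose role is to cancel the $O(\E_\pi[Z_k^{\uu\, M}])$ contributions generated by service completions at stations $j<k$ routing into station $k$. The traffic equation \eqref{eq:traffic} combined with Assumption \ref{assmpt: multiscale} is used to engineer these cancellations so that the residual leading term is precisely $(M+1) r^k\, \E_\pi[Z_k^{\uu\, M}]$. The remaining error terms decompose into lower powers of $Z_k^\uu$, cross moments $\E_\pi[(Z_j^\uu)^{m_j}(Z_k^\uu)^{m_k}]$ with $m_k<M$ and $j<k$, and Palm moments of interarrival/service times; the $Z_j^\uu$ factors are controlled by the induction hypothesis at their native scale $r^{-j}$, and the Palm moments by \eqref{eq: moment condtion}. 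Rearranging and dividing through by $r^{kM}$ yields $\E_\pi[(r^k Z_k^\uu)^M]\leq C_k$.

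The main obstacle is the engineering of the auxiliary test functions in the inductive step so that \emph{every} $O(\E_\pi[Z_k^{\uu\, M}])$ contribution except the target $r^k\,\E_\pi[Z_k^{\uu\, M}]$ cancels, including all cross terms stemming from the routing probabilities $P_{jk}$ with $j<k$; this is a combinatorial bookkeeping problem more than an analytic one, and the correct ansatz for the test functions is what Section \ref{subsec: test functions } is designed to supply. A secondary subtlety is that a GJN is not Markov in $Z$ alone, so the Palm BAR genuinely involves the residuals $(R_e,R_s)$; converting the resulting Palm expressions into stationary moments of polynomials in $Z$ is where the extra $(M+1)$th moment of $T_{e,j},T_{s,j}$ (rather than merely $M$ moments) is consumed, and verifying that these Palm-to-stationary translations respect the $r^k$ scaling is the most delicate estimate in the argument.
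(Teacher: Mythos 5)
Your plan follows the paper's strategy in outline (Palm BAR, polynomial test functions with residual-time corrections, induction over stations, with the drift identity $\mu_k^\uu-\lambda_k=r^k$ producing the leading coefficient), but the two things you defer as ``the main obstacle'' and ``the most delicate estimate'' are precisely the content of the proof, and as stated your plan has a genuine gap there. First, the cancellation of routing cross-terms is not achieved by auxiliary polynomials added to $z_k^{M+1}$; the paper replaces $z_k$ by the single linear functional $u^\T z$ with $u=(w_{1k},\ldots,w_{k-1,k},1,0,\ldots,0)$, where $(w_{jk})$ solves the triangular system of Lemma~\ref{lmm: w}, and pairs $\tfrac{1}{n+1}(u^\T z)^{n+1}$ with the correction $(u^\T z)^n h_k(r_e,r_s)$ from \eqref{eq:h}. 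The point is that the first-order Palm contributions of the polynomial part and of $h_k$ cancel exactly, while $-\A h_k$ leaves the stationary drift $-\sum_j u_j\alpha_j+\mu_k^\uu-\sum_{j\ge k}w_{jk}\mu_j^\uu\ge \tfrac{1}{J}(1-w_{kk})r^k$ (Lemma~\ref{lmm: h simplify}), valid only for $r<r_0$ as in Lemma~\ref{lmm: eta}; this is also where $r_0$ comes from, which your plan never produces. Second, your plan omits the boundary term $\mu_k^\uu\,\E_\pi[(u^\T Z^\uu)^n\I{Z_k^\uu=0}]$ generated by the idle-server indicator in $\A$. Controlling it is not routine: it works only because $u$ has zero components at stations $j>k$, so on $\{Z_k^\uu=0\}$ the polynomial involves only lighter-traffic stations, and a H\"older argument as in \eqref{eq:indicator} with exponent $(n\vee k)/n$ absorbs the mismatch between the scalings $r^{k-1}$ and $r^k$.

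A second structural gap concerns the moment economy. You propose to ``iterate the inequality on $M$'' and to translate Palm expressions into stationary moments, but a single induction on $k$ with the top-order test function does not close: the error terms involve Palm expectations $\E_{e,\ell},\E_{s,\ell}$ of products $(r^kZ_k^\uu)^m\psi(R_e^\uu,R_s^\uu)$, which are not controlled by stationary moments of $Z$ alone and must themselves be proved uniformly bounded. The paper runs a double induction over pairs $(k,n)$ with four interlocking statements --- stationary moments, Palm moments, and cross moments with residual times under both measures --- where the residual power $M-n$ in the auxiliary statements decreases as the queue-length power $n$ increases. This trade-off is exactly what makes the $(M+1)$th moment condition \eqref{eq: moment condtion} sufficient; without it your Taylor expansions would demand higher moments of $T_{e,j},T_{s,j}$ than assumed. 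Finally, the test functions are unbounded and hence not in $\D$, so a truncation step (hard truncation of residuals, soft exponential truncation of the polynomial, then monotone convergence) is needed to apply BAR legitimately; your proposal does not address this.
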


The constant $r_0$ is specified in Lemma \ref{lmm: eta} in Section \ref{subsec: proof of S1}. Its value is determined by the routing matrix $P$ of the GJN. The moment bound in Theorem \ref{thm:Ckn} can be generalized to cases where $M$ is a non-integer. Theorem~\ref{thm:general} in Section~\ref{sec: beta general} is an extension of Theorem~\ref{thm:Ckn} with  $M=J+\varepsilon$ for some $\varepsilon\in (0,1)$.  The uniform moment bound in Theorem~\ref{thm:general} lays a foundation in \cite{DaiGlynXu2023} to prove asymptotic product-form steady-state distribution of GJNs in multi-scale heavy traffic; see Proposition 3.1 of \cite{DaiGlynXu2023}, which is a re-statement of Theorem~\ref{thm:general}.

\section{Sketch of Proof}
Our proof approach is rooted in the BAR equation in \cite{BravDaiMiya2017,BravDaiMiya2023}, which enables us to directly characterize the stationary distribution of the Markov processes $\{X^\uu(t), t \geq 0\}$. Specifically, we utilize BAR with appropriately designed test functions to establish relations between the higher-order and lower-order moments of the queue length $Z^\uu_k$. Through these relations, we apply a mathematical induction argument to prove Theorem \ref{thm:Ckn}. In Section \ref{subsec: BAR}, we first provide a brief overview of the specific BAR of the GJN that is used in our analysis. In Section \ref{subsec: test functions }, we present a proof sketch for Theorem \ref{thm:Ckn}, where we specify the induction hypotheses and the test functions used in the proof. The technical lemmas and proof details are presented in Section \ref{sec:proof}.

\subsection{Basic Adjoint Relationship}\label{subsec: BAR}
Let $\D$ be the set of bounded function $f: \mathbb{S} \rightarrow \mathbb{R}$ satisfying the following conditions: for any $z \in \mathbb{Z}_{+}^J$, (a) the function $f(z,\cdot,\cdot):\R_+^{J}\times \R_+^J\rightarrow \R$ is continuously differentiable at all but finitely many points; (b) the derivatives of $f(z,\cdot,\cdot)$ in each dimension have a uniform bound over $z$.

For a GJN whose Markov process $\{X(t),t\geq 0\}$ is defined in Section \ref{subsec:setting}, we denote by $\pi$ its stationary distribution and by $X$ the steady-state random vector. In our proof, we shall utilize the BAR for the GJN as proposed in \citet{BravDaiMiya2017}:
\begin{equation}\label{eq:BAR17}
    \begin{aligned}
        \E_{\pi}\left[  \A f(X) \right]&+\sum_{j\in \J}\E_{\pi}\left[\sum_{m=1}^{\infty} \left( f(X_{\delta^{e,j}_{m}+}) - f(X_{\delta^{e,j}_{m}-}) \right)  \I{0<t_m^{e,j} \leq 1 } \right]  \\
        &+\sum_{j\in \J}\E_{\pi}\left[\sum_{m=1}^{\infty} \left( f(X_{\delta^{s,j}_{m}+}) - f(X_{\delta^{s,j}_{m}-}) \right)  \I{0<t_m^{s,j} \leq 1 } \right]  = 0,\quad  \forall~ f\in\mathcal{D}.
    \end{aligned}
\end{equation}
For each $f\in\mathcal{D}$, define the ``interior operator''
\begin{equation*}
    \A f(x)=-\sum_{j \in \J} \frac{\partial f}{\partial r_{e,j}}( x)-\sum_{j \in \J} \frac{\partial f}{\partial r_{s,j}}( x) \I{z_j>0},\quad x=(z,r_{e},r_{s})\in \mathbb{S},
\end{equation*}
where the first term on the right-hand side represents the passage of residual times of external arrivals, and the second term stands for the passage of residual service times, which happens only when the corresponding station is busy. In this view, $\A f$ corresponds to the evolution of the GJN between jumps.

In the second and third terms in \eqref{eq:BAR17}, $\delta^{e,j}_m$ and $\delta^{s,j}_m$ represent  the $m$th external arrival and service completion at station $j$, respectively, for $j\in \mathcal{J}$ and $m\in \N$. Accordingly, we use $X_{\delta_{m }^{e,j}-}$ and $X_{\delta_m^{e,j}+}$ to denote the states of the process just before and immediately after the $m$th external arrival to station $j$, and similarly use $X_{\delta_{m}^{s,j}-}$ and $X_{\delta_m^{s,j}+}$ to represent the states of the process just before and immediately after the $m$th service completion at station $j$. The terms $t^{e,j}_m$ and $t^{s,j}_m$ are the times at which the corresponding jumps occur. Therefore, the second and third terms in \eqref{eq:BAR17} correspond to state changes {caused} by jumps of external arrivals and service completions, respectively. 

\vspace{1em}

To simplify the expression in \eqref{eq:BAR17}, we utilize the concept of the Palm measure introduced in \citet{BravDaiMiya2023}. 
Specifically, we introduce expectation operators $\mathbb{E}_{e, j}$ and $\mathbb{E}_{s, j}$ on a pair of random variables $\left(X_-, X_{+}\right) \in$ $\mathbb{S}^2$ and Borel measurable functions $f,g\in \D$ as follows:
\begin{equation}\label{eq: Ee}
    \mathbb{E}_{e, j}\left[f\left(X_- \right) g\left(X_{+}\right)\right]=\frac{1}{\alpha_j} \mathbb{E}_{\pi}\left[\sum_{m=1}^{\infty} f(X_{\delta^{e,j}_{m}-}) g(X_{\delta^{e,j}_{m}+})\I{0<t_m^{e,j} \leq 1 }\right], \quad j\in \J,
\end{equation}
\begin{equation}\label{eq: Es}
    \mathbb{E}_{s, j}\left[f\left(X_- \right) g\left(X_{+}\right)\right]=\frac{1}{\lambda_j} \mathbb{E}_{\pi}\left[\sum_{m=1}^{\infty} f(X_{\delta^{s,j}_{m}-}) g(X_{\delta^{s,j}_{m}+}) \I{0<t_m^{s,j} \leq 1 }\right], \quad j\in \J.
\end{equation}The results (A.12) and (A.13) in \citet{BravDaiMiya2017} demonstrate that $\mathbb{E}_{e, j}[1]=1$ and $\mathbb{E}_{s, j}[1]=1$. Therefore, the expectations $\mathbb{E}_{e, j}[\cdot]$ and $\mathbb{E}_{s, j}[\cdot]$ defined in \eqref{eq: Ee} and \eqref{eq: Es} are probability measures, which we denote by $\mathbb{P}_{e, j}$ and $\mathbb{P}_{s, j}$, respectively. Hence, the distribution of $\left(X_-, X_{+}\right)$ under $\mathbb{P}_{e, j}$ (resp.~$\mathbb{P}_{s, j}$) is determined by \eqref{eq: Ee} (resp.~\eqref{eq: Es}). Consequently, we can consider $X_-$ as the pre-jump state for each external arrival or service completion and $X_{+}$ as the post-jump state. 
\begin{lemma}\label{lmm: independent}
    The pre-jump state $X_{-}$ and the post-jump state $X_{+}$ have the following representation,
    $$
    \begin{aligned}
    & X_{+}=X_{-}+\left( \e{j},   \e{j}T_{e, j} /\alpha_j, 0 \right), \quad \text { under } \mathbb{P}_{e, j}, j \in \J, \\
    & X_{+}=X_{-}+\left( - \e{j}+ \route{j} , 0,   \e{j}T_{s, j} /\mu_j \right), \quad \text { under } \mathbb{P}_{s, j}, j \in \J,
    \end{aligned}
    $$
    where $T_{e, j},T_{s, j}, \route{j}$ for $j \in \J$ are random variables defined on the measurable space $(\mathbb{S}^2, \mathscr{B}(\mathbb{S}^2))$, such that, under Palm distribution $\mathbb{P}_{e, j}$, $T_{e, j}$ is independent of $X_{-}$ and has the same distribution as that of $T_{e, j}(1)$ on $(\Omega, \mathscr{F}, \mathbb{P})$, and, under Palm distribution $\mathbb{P}_{s, j}$, $\left(T_{s, j}, \route{j}\right)$ is independent of $X_{-}$ and has the same distribution as that of $\left(T_{s, j}(1), \route{j}(1)\right)$ on $(\Omega, \mathscr{F}, \mathbb{P})$.
\end{lemma}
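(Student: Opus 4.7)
The plan is to read off both identities directly from the pathwise dynamics of the GJN at a jump epoch, and then to verify the independence and marginal-law claims under the Palm measures $\mathbb{P}_{e,j}$ and $\mathbb{P}_{s,j}$ by exploiting the i.i.d.\ structure of the $3J$ driving sequences in \eqref{eq: 3J sequence}.

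For the representation part, the first identity is immediate from the jump rule at an external arrival to station $j$: at the $m$th such epoch, $R_{e,j}$ hits zero and is reset to the next interarrival time $T_{e,j}^{\mathrm{new}}/\alpha_j$, the queue-length vector gains $\e{j}$, and no other coordinate changes. The second identity is equally direct: at the $m$th service completion at $j$, the queue at $j$ drops by $\e{j}$ while the routing decision adds $\route{j}^{\mathrm{new}}$ (equal to some $\e{k}$ for a destination station $k$, or to $\e{0}$ if the job exits); the residual service clock at $j$ restarts at the fresh value $T_{s,j}^{\mathrm{new}}/\mu_j$; and $R_e$ is left untouched. So the translation formulas stated in the lemma hold pathwise on every sample path.

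For the independence and law under $\mathbb{P}_{e,j}$, I would introduce the $\sigma$-algebra $\mathcal{G}_m^{e,j}$ generated by all interarrival variates, service-time variates, and routing vectors consumed up to the instant $\delta_m^{e,j}-$. By construction, both $X_{\delta_m^{e,j}-}$ and the indicator $\I{0<t_m^{e,j}\leq 1}$ are $\mathcal{G}_m^{e,j}$-measurable, whereas the fresh variate $T_{e,j}^{\mathrm{new}}$ that sets $R_{e,j}$ in $X_+$ is the next unused entry of the sequence $\{T_{e,j}(i)\}_i$; by the independence of the $3J$ sequences in \eqref{eq: 3J sequence}, this variate is independent of $\mathcal{G}_m^{e,j}$ with the marginal distribution of $T_{e,j}(1)$. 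Substituting the pathwise representation into the definition \eqref{eq: Ee} and conditioning on $\mathcal{G}_m^{e,j}$ term by term in the sum then shows that, under $\mathbb{P}_{e,j}$, the fresh variate plays the role of $T_{e,j}$ in the statement, is independent of $X_-$, and has the law of $T_{e,j}(1)$. The argument under $\mathbb{P}_{s,j}$ is completely parallel, using \eqref{eq: Es}, the analogous filtration $\mathcal{G}_m^{s,j}$, and the fresh pair $(T_{s,j}^{\mathrm{new}}, \route{j}^{\mathrm{new}})$, whose joint independence from $\mathcal{G}_m^{s,j}$ again follows from \eqref{eq: 3J sequence}.

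The only delicate point, rather than a genuine obstacle, is the bookkeeping: one has to pin down the filtration $\mathcal{G}_m^{e,j}$ (and $\mathcal{G}_m^{s,j}$) so that the variate read off at a jump is truly unused before that jump, and so that independence survives the reweighting by $1/\alpha_j$ (resp.\ $1/\lambda_j$) that turns the sums in \eqref{eq: Ee}--\eqref{eq: Es} into probability measures. Once that is arranged, Fubini together with the normalizations $\mathbb{E}_{e,j}[1]=\mathbb{E}_{s,j}[1]=1$ recorded from \citet{BravDaiMiya2017} gives the claim. I therefore expect the proof to be brief and to lean on the Palm construction already developed in \cite{BravDaiMiya2023}.
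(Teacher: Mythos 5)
Your proposal is correct, but it takes a genuinely different route from the paper: the paper's entire proof is a one-line citation to Lemma~6.3 of \citet{BravDaiMiya2023}, whereas you reconstruct the argument from first principles. What you sketch --- reading the two translation identities off the pathwise jump dynamics (noting correctly that the convention of defining $R_{s,j}$ even when $Z_j=0$ is what makes the $R_s$-increment zero at an external arrival and makes the reset at a service completion unconditional), and then establishing independence by conditioning on the $\sigma$-algebra of all variates consumed strictly before the jump epoch --- is essentially the content of the cited lemma. Your approach buys self-containedness and makes visible exactly where the i.i.d.\ structure of the $3J$ sequences in \eqref{eq: 3J sequence} enters; the paper's approach buys brevity and outsources the measure-theoretic care to a reference that handles it fully (including the stationary framework in which $X(0)\sim\pi$ is independent of the unused variates, and the treatment of simultaneous events). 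The one place where your sketch stops short of a proof is precisely the ``bookkeeping'' you flag: you must put $\sigma(X(0))$ into $\mathcal{G}_m^{e,j}$, check that both $X_{\delta_m^{e,j}-}$ and $\I{0<t_m^{e,j}\le 1}$ are $\mathcal{G}_m^{e,j}$-measurable while the $m$th fresh variate is not yet consumed, and only then sum over $m$ and normalize. That step is routine but is where the cited lemma does its real work, so deferring to it (or writing out the conditioning carefully) is needed to make the argument complete.
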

\begin{proof}[Proof of Lemma \ref{lmm: independent}]
    The proof follows from Lemma~6.3 in \citet{BravDaiMiya2023}.
\end{proof}
To simplify the notation, we shall omit the subscript and denote by $X\equiv (Z,R_e, R_s) $ the pre-jump state in the Palm expectations defined in \eqref{eq: Ee} and \eqref{eq: Es}. The increments corresponding to different jump events are denoted by
\begin{equation*}
    \Delta_{e,j} \equiv \left( \e{j},   \e{j}T_{e, j} /\alpha_j, 0 \right)~\text{ and }~  \Delta_{s,j} \equiv \left( -\e{j}+ \route{j} , 0,    \e{j}T_{s, j} /\mu_j \right), \qquad j\in \J.
\end{equation*} 
Then, the BAR introduced by \eqref{eq:BAR17} becomes: for any $f\in \D$,
\begin{align}\label{eq:BAR}
    -\E_{\pi}\left[  \A f(X ) \right]
    = \sum_{j\in \J} \alpha_j \E_{e,j} \left[   f(X+{\Delta}_{e,j}) - f(X)  \right] + \sum_{j\in \J} \lambda_j \E_{s,j} \left[   f(X+{\Delta}_{s,j}) -  f(X)  \right].
\end{align}
{To derive} \eqref{eq:BAR}, we assume that there are no simultaneous events among external arrivals and service completions, where event times $t^{e,j}_m$ and $t^{s,j}_m$ are used in \eqref{eq: Ee} and \eqref{eq: Es}. However, BAR \eqref{eq:BAR} still holds even if there are simultaneous events, as detailed in Section 6 of \cite{BravDaiMiya2023}.

\subsection{Sketch of Proof and Test Functions}\label{subsec: test functions }
In this section, we present {the} proof sketch for Theorem \ref{thm:Ckn} using mathematical induction. Our induction hypotheses include moment bounds for the queue length and auxiliary results bounding the expectations of some cross terms of the queue length and the residual interarrival or service times.

\paragraph{Induction hypotheses.} Given the integer $M\geq 1$, for each integer pair $(k,n)$ with  $1\leq k\leq M\wedge J$ and  $0\leq n\leq M$, there exist positive and finite constants $C_{k,n}, D_{k,n}, E_{k,n}, F_{k,n}$ that are independent of $\cc$ such that the following statements hold for all $\cc\in (0,r_0)$, where the constant $r_0$  will be specified later in Lemma \ref{lmm: eta}: 
\begin{enumerate}
    \item[\hypertarget{S1}{(S1)}] ~$
    \E_{\pi}\left[\left(\cc^kZ_k^\uu\right)^n\right]\leq C_{k,n}$;
\item[\hypertarget{S2}{(S2)}] 
$\sum\limits_{\ell=1}^{M\wedge J}\E_{e,\ell}\left[\left(\cc^kZ_k^\uu\right)^n\right]+ \sum\limits_{\ell=1}^J\E_{s,\ell}\left[\left(\cc^kZ_k^\uu\right)^n\right] \leq D_{k,n}$;
\item[\hypertarget{S3}{(S3)}] 
    ~$\E_{\pi} \left[ \left( \cc^k Z_{k}^{\uu}\right)^{n}\psi_{M-n}\left(R_e^{(r)},R_s^{(r)}\right) \right] \leq E_{k,n}$;
\item[\hypertarget{S4}{(S4)}]  
$\sum\limits_{\ell=1}^{M\wedge J} \E_{e,\ell}\left[ \left( \cc^k Z_{k}^{\uu}\right)^{n} \psi_{M-n}\left(R_e^{(r)},R_s^{(r)}\right)\right]+\sum\limits_{\ell=1}^J \E_{s,\ell}\left[ \left( \cc^k Z_{k}^{\uu}\right)^{n} \psi_{M-n}\left(R_e^{(r)},R_s^{(r)}\right)\right] \leq F_{k,n}$;
\end{enumerate}
where 
\begin{equation}\label{eq: function psi}
    \psi_{n}\left(r_e, r_s \right) = \sum_{j=1}^{M\wedge J} r_{e,j}^{n} +  \sum_{j=1}^J  r_{s,j}^{n}.
\end{equation}
It is essential to note that the function $\psi_{M-n}$, appearing on the left-hand side of the auxiliary statements (\hyperlink{S3}{S3}) and (\hyperlink{S4}{S4}), depends on the order $M+1$ of the moment condition~\eqref{eq: moment condtion} on the unitized times. This design of the auxiliary statements plays a crucial role in our proof and assists in reducing the order of the moment condition \eqref{eq: moment condtion} required for establishing uniform bounds on $\E_{\pi}[(\cc^kZ_k^\uu)^M]$ for $1\leq k \leq M\wedge J$.

\paragraph{Mathematical induction.}
We initially prove Statements (\hyperlink{S1}{S1})-(\hyperlink{S4}{S4}) for the base step, i.e., when $1\leq k\leq M\wedge J$ and $n=0$. The proof details are provided in Section \ref{subsec: bounds for residual times}. For the inductive step concerning each pair $(k,n)$, we follow the sequence $(1,1),(1,2),\ldots,(1,M)$, $(2,1),(2,2),\ldots,(2,M),\ldots,(M\wedge J,1),(M\wedge J,2),\ldots,(M\wedge J,M)$. Specifically, we verify Statements (\hyperlink{S1}{S1})-(\hyperlink{S4}{S4}) for each given pair $(k,n)$, under the induction hypotheses that they are true for all pairs in $S_{k,n}$, defined as
\begin{equation*}
    S_{k,n}\equiv \left\{ (i,m): 1\le i\le k-1, 0\le m \le M \text{ or } i=k, 0\le m\le n-1\right\}.
\end{equation*}
To prove each statement, we employ the BAR \eqref{eq:BAR} along with a properly designed test function, which allows us to bound the {statements} of the pair $(k,n)$ by those of the pairs in $S_{k,n}$, in accordance with the induction hypotheses. 

Before presenting the proof sketch and test functions for each statement, we first define a matrix  {$(w_{jk})_{j,k\in\J}$} in Lemma \ref{lmm: w}. This matrix is chosen based on the routing probabilities of the GJN and will be used in the subsequent proofs.
\begin{lemma}\label{lmm: w} 
The following set of equations 
$$\quad w_{jk} = P_{jk} + \sum_{\ell=1}^{ k-1}P_{j\ell}w_{\ell k}, \quad\forall~ j, k\in \J,$$
has a unique solution $(w_{jk})\in [0,1]^{J\times J}$, and $w_{kk}<1$ for any $k\in \J$.
\end{lemma}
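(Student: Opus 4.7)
The plan is to exploit the triangular structure of the system. For each fixed $k \in \J$, the equation $w_{jk} = P_{jk} + \sum_{\ell=1}^{k-1} P_{j\ell} w_{\ell k}$ couples $w_{jk}$ only to $\{w_{\ell k}\}_{\ell < k}$, so the system decouples across $k$, and within column $k$ the rows $j = 1, \ldots, k-1$ form a closed linear subsystem; once this is solved, the entries $w_{jk}$ for $j \geq k$ follow by direct substitution into the recursion. Fixing $k$, the analysis therefore reduces to this $(k-1)$-dimensional subsystem.

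I would write the subsystem as $(I - A^{(k)})\, v = b^{(k)}$, with $A^{(k)} = (P_{j\ell})_{j,\ell=1}^{k-1}$ the leading $(k-1)\times(k-1)$ principal submatrix of $P$, $b^{(k)} = (P_{jk})_{j=1}^{k-1}$, and $v = (w_{\ell k})_{\ell=1}^{k-1}$. The entrywise comparison $(A^{(k)})^n_{ij} \leq (P^n)_{ij}$ combined with the transience of $P$ (so $\sum_n P^n = (I-P)^{-1}$ is finite entrywise) shows that $\sum_n (A^{(k)})^n$ converges; hence $I - A^{(k)}$ is invertible with non-negative inverse, giving $v = \sum_{n \ge 0}(A^{(k)})^n b^{(k)}$ as the unique non-negative solution. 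Non-negativity of $w_{jk}$ for $j \geq k$ is then immediate from the recursion.

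For the upper bound $w_{jk} \leq 1$, I would adopt a probabilistic reading. Augmenting $P$ with an absorbing state $0$ yields a sub-stochastic Markov chain $(Y_n)$, and an induction on the recursion identifies $w_{jk}$ with $\mathbb{P}_j(Y_\tau = k)$, where $\tau = \inf\{n \geq 1: Y_n \notin \{1, \ldots, k-1\}\}$. Being a probability, this quantity automatically lies in $[0,1]$.

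The delicate step is the strict inequality $w_{kk} < 1$, and this is where I expect to spend the most care. I would restrict to the $k \times k$ leading principal submatrix $\tilde{P}^{(k)}$, which is itself transient by the same entrywise comparison. Under this interpretation $w_{kk}$ is precisely the first-return probability to $k$ in the sub-stochastic chain governed by $\tilde{P}^{(k)}$, and a standard strong-Markov renewal argument gives
\[
\sum_{n \ge 0} (\tilde{P}^{(k)})^n_{kk} \;=\; \sum_{n \ge 0} w_{kk}^n.
\]
Transience of $\tilde{P}^{(k)}$ makes the left-hand side finite, which forces $w_{kk} < 1$.
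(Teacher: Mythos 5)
Your proof is correct, and its core idea---reading $w_{jk}$ as the probability that the routing chain, started from $j$, enters $k$ before exiting the network or visiting $\{k+1,\ldots,J\}$---is exactly the interpretation the paper uses. The difference is one of self-containedness: the paper delegates existence and uniqueness to Lemma~7.3 of \citet{DaiGlynXu2023} and disposes of $w_{kk}<1$ with the one-line remark that the network is open, whereas you prove both from scratch. Your Neumann-series argument (entrywise domination $(A^{(k)})^n \le P^n$ on the common indices, plus transience of $P$ giving convergence of $\sum_n P^n$, hence invertibility of $I-A^{(k)}$ with non-negative inverse) is a clean replacement for the citation, and it delivers non-negativity of the solution for free. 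Likewise, your renewal identity $\sum_{n\ge 0}(\tilde P^{(k)})^n_{kk} = (1-w_{kk})^{-1}$, with the left side finite by the same domination, is a precise way of turning ``the GJN is open'' into the strict inequality $w_{kk}<1$; the paper asserts this step without argument. All the individual steps check out: the column-$k$ system is indeed closed on rows $j\le k-1$ with the remaining rows determined by substitution, the first-step analysis matches the recursion term by term, and the identification of $w_{kk}$ with the first-return probability of the chain killed outside $\{1,\ldots,k\}$ is exact.
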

\begin{proof}[Proof of Lemma \ref{lmm: w}]
    The proof of existence and uniqueness follows from Lemma 7.3 in \cite{DaiGlynXu2023}. For any $j,k\in \J$, $w_{jk}$ can be interpreted as the probability that a customer starting from station $j$ will enter station $k$ before exiting the network or visiting any stations in $\{k+1,...,d\}$. Since the GJN is open, $w_{kk}$ cannot be $1$.
\end{proof}

\paragraph{Proof of (S1).} To prove (\hyperlink{S1}{S1}) for the pair $(k,n)$, we consider the following test function:
\begin{equation}\label{eq:funcS1o}
	f _{k,n}\left(x\right) = \frac{1}{n+1} r^{k(n-1)} \left( \b{k}^\T z\right)^{n+1}  + r^{k(n-1)}\left( \b{k}^\T z\right)^{n} h _{k}\left(r_e, r_s \right),\quad x=(z,r_{e},r_{s})\in \mathbb{S},
\end{equation}
where the vector $u$ is given by 
\begin{equation}\label{eq:u}
    u=(w_{1k},\cdots, w_{k-1,k}, 1, 0, \ldots, 0)\in \R^J_+,
\end{equation}
and
\begin{equation}\label{eq:h}
	h_{k}\left(r_e, r_s \right) =  -  \sum_{j=1}^k  \bb{k}{j}  \alpha_j{r}_{e,j} +    {\mu}^\uu_k {r}_{s,k}  -\sum_{j=k}^{J} w_{jk} {\mu}^\uu_j r_{s,j}.
\end{equation}
The function $h_k$ is designed such that it contains only the residual interarrival times of lighter-traffic stations and residual service times of heavier-traffic stations, and the coefficients are chosen according to the routing probabilities of the GJN, as illustrated in Lemma \ref{lmm: w}. This will ensure that applying the operator $\A$ to $h_k$ yields terms of order $r^k$ (see Lemma \ref{lmm: h simplify}) and the Palm terms in BAR \eqref{eq:BAR} will have the polynomial in $u'z$ with order up to $n-1$.

By substituting \eqref{eq:funcS1o} into the BAR \eqref{eq:BAR}, we obtain an inequality in the form of 
$$\E_{\pi}\left[ a \left(\cc^{k}\b{k}^\T {Z}^{\uu}\right)^n\right]\leq B,$$ 
where $a>0$ is a constant independent of $r$. The constant $B$ is {the upper bound of} a linear combination of terms given by Statements (\hyperlink{S1}{S1})-(\hyperlink{S4}{S4}) corresponding to pairs in $S_{k,n}$ with finite coefficients that are independent of $r$. Thus, $B$ is also finite and independent of $r$ following the induction hypotheses. Consequently, we can conclude that $$\E_{\pi}\left[\left(\cc^kZ_k^\uu\right)^n\right]\leq \E_{\pi}\left[\left(\cc^{k} \b{k}^\T {Z}^{\uu}\right)^n \right] \leq B/a.$$ 
The complete proof of (\hyperlink{S1}{S1}), including the definition of constant $a$ and the derivation of $B$, is given in {\eqref{eq: result S1} of} Section \ref{subsec: proof of S1} with detailed calculations.

\paragraph{Proofs of (S2) to (S4)}
The proofs for Statements (\hyperlink{S2}{S2})-(\hyperlink{S4}{S4}) follow a similar argument as in the proof of (\hyperlink{S1}{S1}), in which {statements} are bounded by applying specific test functions to the BAR \eqref{eq:BAR} and utilizing the induction hypotheses. Below we specify the test functions for (\hyperlink{S2}{S2})-(\hyperlink{S4}{S4}) in \eqref{eq:funcS2o}-\eqref{eq:funcS4o}, respectively. The complete proofs are given in Sections \ref{subsec: S2}-\ref{subsec: S4}.

\begin{equation}\label{eq:funcS2o}
    f_{k,n,D}\left(x\right) =  \left(\cc^{k} {z}_k\right)^n \psi_{1}\left(r_e,r_s\right),\quad x=(z,r_{e},r_{s})\in \mathbb{S},
\end{equation}

\begin{equation}\label{eq:funcS3o}
    f_{k,n,E}\left(x\right) = \left( \cc^{k} {z}_k\right)^n   \psi_{M-n+1}\left(r_e,r_s\right),\quad x=(z,r_{e},r_{s})\in \mathbb{S},
\end{equation}

\begin{equation}\label{eq:funcS4o}
    f_{k,n,F}\left(x\right) = \left(\cc^{k}z_k\right)^n \psi_{M-n} \left(r_e,r_s\right) \psi_{1}\left(r_e,r_s\right),\quad x=(z,r_{e},r_{s})\in \mathbb{S},
\end{equation}
where the functions $\psi_1$, $\psi_{M-n+1}$ and $\psi_{M-n}$ are {as} defined in \eqref{eq: function psi}.

\section{Proof Details}\label{sec:proof} 
In this section, we present {the} detailed proof of Theorem \ref{thm:Ckn} using mathematical induction. We begin by establishing the base step in Section \ref{subsec: bounds for residual times} {for $1\leq k\leq M\wedge J$ and $n=0$}. Then, in Sections \ref{subsec: proof of S1}-\ref{subsec: S4}, we carry out the inductive steps for Statements (\hyperlink{S1}{S1})-(\hyperlink{S4}{S4}). However, the test functions in Sections \ref{subsec: bounds for residual times}-\ref{subsec: S4} are not bounded and hence not in $\D$. In Section \ref{sec: truncation}, we make the proofs in Sections \ref{subsec: bounds for residual times}-\ref{subsec: S4} rigorous by introducing the truncated test functions, which are in $\D$.
Finally, in Section \ref{sec: beta general}, we extend the main results to moments of non-integer order. Throughout the rest of the paper, we use the shorthand notation $\psi$ for $\psi(R_e^\uu,R_s^\uu)$.

\subsection{Proof of the Base Step When \texorpdfstring{$n=0$}{n=0}}\label{subsec: bounds for residual times}
In this section, we aim to demonstrate that Statements (\hyperlink{S1}{S1})-(\hyperlink{S4}{S4}) hold for any station $1\leq k \leq M\wedge J$ when $n=0$. Clearly, (\hyperlink{S1}{S1}) and (\hyperlink{S2}{S2}) are trivially satisfied for $n=0$. When $n=0$, (\hyperlink{S3}{S3}) holds for $M=1$ by Lemma~4.5
of \citet{BravDaiMiya2017}. For $M\ge 1$ with $n=0$, Lemma~6.4 of \citet{BravDaiMiya2023} proves that for any $j\in \mathcal{J}$,
\begin{align*}
  \sup_{r\in (0,1)}  \E_{\pi} \left[ \left( R^\uu_{e,j} \right)^M \right] <\infty, \quad
  \sup_{r\in (0,1)}  \E_{\pi} \left[ \left( R^\uu_{s,j} \right)^M \I{Z_j^{(r)}>0}\right] <\infty.
\end{align*}
Since $\E_{\pi} [ ( R^\uu_{s,j} )^M \II{Z_j^{(r)}=0}] = \E [ (T_{s,j}/\mu^\uu_j)^M \II{Z_j^{(r)}=0}]{<\infty}$ for any $j\in \mathcal{J}$ according to Lemma~\ref{lmm: independent}, we have
\begin{align}
    \sup_{r\in (0,1)}  \E_{\pi} \left[ \left( R^\uu_{s,j} \right)^M \right] &=\sup_{r\in (0,1)}  \left( \E_{\pi} \left[ \left( R^\uu_{s,j} \right)^M \I{Z_j^{(r)}>0}\right] +   \E_{\pi} \left[ \left( R^\uu_{s,j} \right)^M \I{Z_j^{(r)}=0}\right] \right) \notag \\
    &\equiv E_{k,0} <\infty, \label{eq: E10}
\end{align}
which implies that (\hyperlink{S3}{S3}) holds for $M\ge 1$, $n=0$ and $1\leq k\leq M\wedge J$, with $E_{k,0}$ as given above.

To prove Statement (\hyperlink{S4}{S4}) when $n=0$, we employ the test function:
\begin{equation}\label{eq:func base}
    f_{0,F} \left(x\right) = \psi_{M}(r_e,r_s)  \psi_{1}(r_e,r_s),\quad x=(z,r_{e},r_{s})\in \mathbb{S}.
\end{equation}
Substituting $f_{0,F}$ into the BAR \eqref{eq:BAR}, the left-hand side becomes
\begin{align}
    -\E_{\pi}\left[\A f_{0,F} \left(X^\uu\right) \right] &=M \E_{\pi}\left[  \left( \sum_{j=1}^{M\wedge J} \left( R_{e,j}^\uu \right)^{M-1}  +\sum_{j=1}^J \left( R_{s,j}^\uu \right)^{M-1} \I{Z_{j}^{\uu}>0} \right)   \psi_{1}  \right] \notag \\
    &\quad + \E_{\pi}\left[    \left( {M\wedge J}  +\sum_{j=1}^J \I{Z_{j}^{\uu}>0} \right) \psi_{M}    \right]\notag \\
    &\leq M \E_{\pi}\left[ \psi_{M-1}   \psi_{1}  \right]  + 2J\E_{\pi}\left[ \psi_{M}   \right]  \leq \left( 4J^2 M + 2J \right) E_{1,0}, \label{eq: base step LHS}
\end{align}
where the {last} inequality holds due to the base step of Statement (\hyperlink{S3}{S3}) and {the fact that}
\begin{align}\label{eq: cross term bound}
    \psi_{M-1}  \psi_{1}  \leq 2J \left( R_{\max}^\uu \right)^{M-1}   2J R_{\max}^\uu = 4J^2 \left( R_{\max}^\uu \right)^{M} \leq 4J^2 \psi_{M},
\end{align}
where $R_{\max}^\uu\equiv \max(\{R_{e,j}^{(r)}: 1\leq j\leq M\wedge J\}\cup\{R_{s,j}^{(r)}: 1\leq j\leq J\})$.

For each $\ell \leq M\wedge J$, the term on the right-hand side of the BAR that corresponds to jumps of external arrivals to station $\ell$ becomes
\begin{align}
    \E_{e,\ell}\left[f_{0,F} \left(X^\uu+ \Delta_{e,\ell}\right) - f_{0,F} \left(X^\uu \right)  \right] &= \E_{e,\ell} \left[\left(\psi_{M}    +   \left(   \frac{T_{e,\ell}}{\alpha_{\ell}}  \right)^{M}    \right)\left( \psi_{1}   +     \frac{T_{e,\ell}}{\alpha_{\ell}} \right) - \psi_{M} \psi_{1}  \right] \notag \\
    & \geq   \E_{e,\ell}\left[  \psi_{M}  \frac{T_{e,\ell}}{\alpha_{\ell}}  \right] = \frac{\E_{e,\ell}\left[  \psi_{M}  \right] }{\alpha_{\ell}}  \label{eq: base step RHS 1},
\end{align}
where the inequality is obtained by neglecting the terms associated with $( T_{e,\ell}/\alpha_\ell)^{M}$,  and the second equality follows from Lemma \ref{lmm: independent}. As $f_{0,F}$ is independent of the states affected by external arrivals to station $\ell>M\wedge J$, {the terms corresponding to jumps of external arrivals to station $\ell>M\wedge J$ are all zero.}

Furthermore, for each $\ell\in \J$, the term related to jumps of service completions at station $\ell$ is
\begin{align}
    \E_{s,\ell}\left[f_{0,F} \left(X^\uu+ \Delta_{s,\ell}\right) - f_{0,F} \left(X^\uu \right)  \right]   &= \E_{s,\ell}\left[ \left(\psi_{M }    +   \left(  \frac{T_{s,\ell}}{\mu_{\ell}^\uu}  \right)^{M }    \right)\left( \psi_{1}  +    \frac{T_{s,\ell}}{\mu_{\ell}^\uu}  \right) - \psi_{M}   \psi_{1}  \right] \notag \\
    & \geq     \E_{s,\ell}\left[  \psi_{M}  \frac{T_{s,\ell}}{\mu_\ell^\uu}   \right]  \geq \frac{\E_{s,\ell}\left[  \psi_{M}  \right] }{\lambda_\ell + 1}, \label{eq: base step RHS 2}
\end{align}
where the first inequality is obtained by neglecting the terms associated with $( T_{s,\ell}/\mu_\ell^\uu )^{M}$,  and the second inequality follows from Lemma \ref{lmm: independent} and {the fact that} ${\mu}^\uu_\ell =   \lambda_\ell + r^\ell \leq \lambda_\ell+ 1$ in Assumption~\ref{assmpt: multiscale}.

In summary, it follows from \eqref{eq: base step LHS}, \eqref{eq: base step RHS 1}, \eqref{eq: base step RHS 2} and BAR \eqref{eq:BAR} that
\begin{align*}
    \sum_{\ell =1}^{M \wedge J} \E_{e,\ell}\left[  \psi_{M}  \right] +\sum_{\ell=1}^J \frac{\lambda_\ell }{\lambda_\ell + 1} \E_{s,\ell}\left[  \psi_{M}  \right] \leq \left( 4J^2M + 2J \right) E_{1,0} .
\end{align*}
Therefore, Statement (\hyperlink{S4}{S4}) holds for $n=0$, all $\cc \in (0,1)$ and $1\leq k \leq M\wedge J$ with $$F_{k,0}\equiv  \left( 4J^2M + 2J \right) E_{1,0} \cdot  \max_{\ell\in \J}  \frac{\lambda_\ell + 1}{\lambda_\ell } < \infty.$$

\subsection{Proof of (S1)}\label{subsec: proof of S1}
Assuming that Statements (\hyperlink{S1}{S1})-(\hyperlink{S4}{S4}) hold for any pair in $S_{k,n}$ according to the induction hypotheses, we proceed to prove Statement (\hyperlink{S1}{S1}) for the pair $(k,n)$ by applying the test function $f_{k,n}$ defined by \eqref{eq:funcS1o} to the BAR \eqref{eq:BAR}.

To prove Statement (\hyperlink{S1}{S1}), we introduce two technical lemmas. Lemma \ref{lmm: eta} specifies the value of the constant $r_0$ as introduced in Theorem \ref{thm:Ckn}, and Lemma \ref{lmm: h simplify} shows that a linear combination of external arrival rates and service rates, with coefficients corresponding to the function $h_k$ in \eqref{eq:h}, can produce a desired order of $r^k$ as $r$ is small enough.
\begin{lemma}\label{lmm: eta}
Define
\begin{equation*}
    r_0 = \min_{\{1\leq k < j \leq J \mid  w_{jk} \neq 0\}} \left(\frac{   1-w_{kk}  }{J  w_{jk}}\right)^{\frac{1}{j-k}},
\end{equation*}
where the minimum is taken as $1$ if the set is empty.
Then, $r_0>0$, and for any $r\in (0,r_0)$ and any $k=1,\ldots,J$
\begin{equation} \label{eq: r0 condition}
    \left( 1-\ww{k}{k} \right)   \cc^k - \sum_{j=k+1}^J  \ww{j}{k}     \cc^j\ge \frac{1}{J}\left( 1-\ww{k}{k} \right)     \cc^k.
\end{equation}
\end{lemma}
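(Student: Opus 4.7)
The plan is to establish the inequality by a direct term-by-term comparison, using the very definition of $r_0$ together with the fact from Lemma \ref{lmm: w} that $w_{kk} < 1$, so that $1 - w_{kk} > 0$ for every $k \in \J$.

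First I would verify that $r_0 > 0$. Since $w_{kk} < 1$ by Lemma \ref{lmm: w}, the numerator $1 - w_{kk}$ is strictly positive, and for each admissible index pair $(j,k)$ in the minimum the quantity $w_{jk}$ is strictly positive by definition of the index set. Thus each term in the minimum is positive, and when the set is empty we use the convention that $r_0 = 1$; in either case $r_0 \in (0,1]$.

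Next I would fix $k \in \{1,\ldots,J\}$ and $r \in (0, r_0)$, and bound $\sum_{j=k+1}^J w_{jk} r^j$ summand by summand. For indices $j$ with $w_{jk} = 0$ the summand vanishes. For indices $j$ with $w_{jk} \neq 0$, the definition of $r_0$ gives
\[
r^{j-k} < r_0^{j-k} \leq \frac{1 - w_{kk}}{J\, w_{jk}},
\]
so $w_{jk}\, r^{j-k} \leq (1 - w_{kk})/J$. Summing this over the at most $J - k$ indices $j \in \{k+1,\ldots,J\}$ and multiplying by $r^k$ yields
\[
\sum_{j=k+1}^J w_{jk}\, r^j \leq \frac{(J-k)(1 - w_{kk})}{J}\, r^k.
\]

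Finally I would subtract from $(1-w_{kk}) r^k$ to obtain
\[
(1-w_{kk})\, r^k - \sum_{j=k+1}^J w_{jk}\, r^j \geq \frac{k(1 - w_{kk})}{J}\, r^k \geq \frac{1 - w_{kk}}{J}\, r^k,
\]
which is the desired \eqref{eq: r0 condition}. The case $k = J$ is trivial since the sum is empty. There is no genuine obstacle here: the argument is essentially bookkeeping, and the only thing to be careful about is the empty-set convention for $r_0$ and the edge case $k = J$, both of which are easily handled.
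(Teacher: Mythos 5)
Your proof is correct and follows essentially the same route as the paper's: apply the definition of $r_0$ pairwise, raise to the power $j-k$, multiply by $w_{jk}r^k$, and sum the at most $J-k$ nonzero terms to get the $\frac{J-k}{J}(1-w_{kk})r^k$ bound. The only cosmetic difference is that you phrase the final step via $k(1-w_{kk})/J \ge (1-w_{kk})/J$ rather than the paper's rearrangement of $\frac{J-k}{J} \le 1 - \frac{1}{J}$; both are the same inequality.
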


\begin{proof}[Proof of Lemma \ref{lmm: eta}]
    According to Lemma \ref{lmm: w}, we have $w_{jk}\geq 0$ and $w_{kk}<1$ for any $j,k\in \J$. Then, $r < r_0$ implies that
    \begin{equation*}
        r < \left(\frac{   1-w_{kk}  }{J  w_{jk}}\right)^{\frac{1}{j-k}}\quad \text{ for any $1\leq k < j \leq J$ and $w_{jk}\neq 0$}.
    \end{equation*}
    Raising both sides of the inequality to the power of $j-k$ and then multiplying both sides by $w_{jk}r^k$, we have
    \begin{equation*}
        \ww{j}{k}   \cc^j < \frac{1}{J}\left( 1-\ww{k}{k} \right)     \cc^k \quad \text{ for any $1\leq k < j \leq J$ and $w_{jk}\neq 0$}.
    \end{equation*}
    Therefore, for any $k\in \J$,
    \begin{equation*}
        \sum_{j=k+1}^{J}\ww{j}{k}   \cc^j < \frac{J-k}{J}\left( 1-\ww{k}{k} \right) \cc^k \leq  \left( 1-\ww{k}{k} \right) \cc^k - \frac{1}{J}  \left( 1-\ww{k}{k} \right) \cc^k.
    \end{equation*}
    which implies \eqref{eq: r0 condition}.
\end{proof}
\begin{lemma}\label{lmm: h simplify}
    For any $k\in \J$ and $r\in (0,r_0)$, we have
    \begin{equation*}
        - \sum_{j=1}^k  \bb{k}{j}  \alpha_j +  {\mu}^\uu_k - \sum_{j=k}^{J} \ww{j}{k}  {\mu}^\uu_j \geq \frac{1}{J}\left( 1-\ww{k}{k} \right)     \cc^k.
    \end{equation*}
\end{lemma}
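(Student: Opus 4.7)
The plan is to reduce the claimed inequality directly to Lemma~\ref{lmm: eta} by showing that, after substituting $\mu_j^{(r)}=\lambda_j+r^j$ and the explicit form of $u$, all terms involving the arrival rates $\alpha_j$ and nominal rates $\lambda_j$ cancel, leaving only the expression $(1-w_{kk})r^k-\sum_{j=k+1}^J w_{jk}r^j$ that is bounded from below in Lemma~\ref{lmm: eta}.

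More concretely, I would first recall that $u_{k,j}=w_{jk}$ for $j<k$, $u_{k,k}=1$, and $u_{k,j}=0$ for $j>k$, so the left-hand side becomes
\begin{equation*}
    -\sum_{j=1}^{k-1}w_{jk}\alpha_j-\alpha_k+\mu_k^{(r)}-w_{kk}\mu_k^{(r)}-\sum_{j=k+1}^J w_{jk}\mu_j^{(r)}.
\end{equation*}
After splitting $\mu_j^{(r)}=\lambda_j+r^j$, this equals $T+(1-w_{kk})r^k-\sum_{j=k+1}^J w_{jk}r^j$, where
\begin{equation*}
    T=-\sum_{j=1}^{k-1}w_{jk}\alpha_j-\alpha_k+\lambda_k-\sum_{j=k}^J w_{jk}\lambda_j.
\end{equation*}
The key algebraic step is then to prove $T=0$.

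To show $T=0$, I would use the traffic equation \eqref{eq:traffic} to rewrite $\lambda_k-\alpha_k=\sum_{\ell=1}^J\lambda_\ell P_{\ell k}$, split this as a sum over $\ell<k$ and $\ell\ge k$, and use Lemma~\ref{lmm: w} in the form $P_{\ell k}=w_{\ell k}-\sum_{m=1}^{k-1}P_{\ell m}w_{mk}$ (valid for $\ell\geq k$) to collapse the $\ell\ge k$ contribution into a sum indexed by $m\le k-1$. Reindexing and using \eqref{eq:traffic} again for $j\le k-1$ to eliminate $\alpha_j$ in favor of $\lambda_j-\sum_{\ell=1}^J\lambda_\ell P_{\ell j}$, I expect all cross terms to regroup into $\sum_{\ell=1}^{k-1}\lambda_\ell\bigl(P_{\ell k}+\sum_{j=1}^{k-1}P_{\ell j}w_{jk}\bigr)-\sum_{j=1}^{k-1}\lambda_j w_{jk}$, which vanishes by another application of the defining relation $w_{\ell k}=P_{\ell k}+\sum_{j=1}^{k-1}P_{\ell j}w_{jk}$ from Lemma~\ref{lmm: w}.

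Once $T=0$ is established, the lemma follows immediately from Lemma~\ref{lmm: eta}. The only nontrivial step is the cancellation argument for $T$; this is the main obstacle, but it is a finite bookkeeping exercise that amounts to recognizing $w$ as the Green's kernel restricted to the sub-chain on stations $\{1,\dots,k-1\}$ with absorption at $k$ and at $\{k+1,\dots,J\}\cup\{0\}$, so that the flow balance for station $k$ under Poisson-like rates $\lambda_j$ forces the non-$r^k$ part to cancel. Everything else is substitution and an appeal to Lemma~\ref{lmm: eta}.
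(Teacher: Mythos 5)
Your proposal is correct and follows essentially the same route as the paper: both reduce the claim to the fact that the $\alpha$- and $\lambda$-terms cancel via the traffic equations \eqref{eq:traffic} and the defining recursion for $w$ in Lemma~\ref{lmm: w}, leaving $(1-w_{kk})r^k-\sum_{j=k+1}^J w_{jk}r^j$, which is then bounded below by Lemma~\ref{lmm: eta}. The paper organizes the cancellation slightly more compactly---substituting $\alpha_j=\lambda_j-\sum_{\ell}\lambda_\ell P_{\ell j}$ for all $j$ at once and applying the recursion $w_{\ell k}=P_{\ell k}+\sum_{j=1}^{k-1}P_{\ell j}w_{jk}$ a single time (it holds for all $\ell\in\J$, so your split into $\ell<k$ and $\ell\ge k$ is unnecessary)---but your bookkeeping for $T=0$ checks out.
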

\begin{proof}[Proof of Lemma \ref{lmm: h simplify}]
    By utilizing the traffic equation \eqref{eq:traffic}, the definition of $u$ in \eqref{eq:u} and Lemma \ref{lmm: w}, we obtain
    \begin{align*}
        &  - \sum_{j=1}^k  \bb{k}{j}  \alpha_j + {\mu}^\uu_k - \sum_{j=k}^{J} \ww{j}{k}  {\mu}^\uu_j =  -   \sum_{j=1}^J  \bb{k}{j} \left( \lambda_{j}-\sum_{\ell =1}^J \lambda_\ell P_{\ell j} \right) + {\mu}^\uu_k  - \sum_{j=k}^{J} \ww{j}{k}  {\mu}^\uu_j \\
        &\quad=  -    \sum_{j=1}^J  \lambda_{j} \bb{k}{j} +  \sum_{j =1}^J \lambda_j \ww{j}{k}  +{\mu}^\uu_k  - \sum_{j=k}^{J} \ww{j}{k}  {\mu}^\uu_j =\left( {\mu}^\uu_k  - \lambda_k \right) - \sum_{j=k}^J  \ww{j}{k}  \left(  \mu^\uu_j -\lambda_j \right)  \\
        &\quad= \left( 1-\ww{k}{k} \right)   \cc^k - \sum_{j=k+1}^J  \ww{j}{k}     \cc^j\ge \frac{1}{J}\left( 1-\ww{k}{k} \right)     \cc^k,
    \end{align*} 
    where the inequality follows from Lemma \ref{lmm: eta}.
\end{proof}

We are now prepared to prove (\hyperlink{S1}{S1}) by applying the test function $f_{k,n}$, as defined in~\eqref{eq:funcS1o}, to BAR \eqref{eq:BAR}. The left-hand side of BAR becomes
\begin{align}
	&-\E_{\pi}\left[\A f_{k,n}\left( X^\uu \right)  \right]=-\E_{\pi}\left[\cc^{k(n-1)} \left(\b{k}^\T Z^{\uu} \right)^n \A h_{k}\left( R_{e}^\uu, R_{s}^\uu \right)  \right]\notag\\
	&\quad=\E_{\pi}\left[\cc^{k(n-1)} \left(\b{k}^\T Z^{\uu} \right)^n  \left( -\sum_{j=1}^k \bb{k}{j}  \alpha_j   +   {\mu}^\uu_k \I{Z_k^\uu>0}  -  \sum_{j=k}^{J} \ww{j}{k}  {\mu}^\uu_j \I{Z_j^\uu>0}  \right)     \right]\notag\\
	&\quad\geq 
    \E_{\pi}\left[\cc^{k(n-1)} \left(\b{k}^\T Z^{\uu} \right)^n  \left(  \left(  - \sum_{j=1}^k  \bb{k}{j}  \alpha_j +  {\mu}^\uu_k - \sum_{j=k}^{J} \ww{j}{k}  {\mu}^\uu_j \right)   -   {\mu}^\uu_k     \I{Z_k^\uu=0} \right)    \right]\notag \\
    &\quad \geq  \E_{\pi}\left[\cc^{k(n-1)} \left(\b{k}^\T Z^{\uu} \right)^n  \left(  \frac{1}{J}(1-w_{kk})r^k  -   {\mu}^\uu_k     \I{Z_k^\uu=0} \right)    \right],\label{eq:LHS}
\end{align}
where the last inequality follows from Lemma \ref{lmm: h simplify}.
The indicator part in \eqref{eq:LHS} can be bounded as follows:
\begin{align}
	&\E_{\pi}\left[\cc^{k(n-1)} \left(\b{k}^\T Z^{\uu} \right)^n  {\mu}^\uu_k   \I{Z_k^\uu=0}\right]= \cc^{n-k} {\mu}^\uu_k \E_{\pi}\left[ \left(\cc^{k-1} \sum_{j=1}^{k-1}  \bb{k}{j}  {Z}_j^{\uu} \right)^{n} \I{Z_k^\uu=0}  \right] \notag \\
    & \leq {\mu}^\uu_k  \E_{\pi}\left[  \left(\cc^{k -1}\sum_{j=1}^{k-1}  \bb{k}{j}  {Z}_j^{\uu} \right)^{n\vee k} \right]^{\frac{n}{n\vee k}} r^{n-\frac{kn}{n\vee k}} \leq \left( \lambda_k +1 \right) \E_{\pi}\left[  \left(\cc^{k -1}\sum_{j=1}^{k-1}  \bb{k}{j}  {Z}_j^{\uu} \right)^{n\vee k} \right]^{\frac{n}{n\vee k}}, \label{eq:indicator}
\end{align}
where the first inequality follows from Hölder's inequality, and the second inequality is obtained by $n - kn / (n \vee k) \geq 0$ and  ${\mu}^\uu_k =   \lambda_k + r^k \leq \lambda_k+ 1$ in Assumption~\ref{assmpt: multiscale}. According to Statement~(\hyperlink{S1}{S1}) of the induction hypotheses, the final term in \eqref{eq:indicator} can be further bounded by a constant $\Theta_1$, which is independent of $r\in (0,r_0)$.
As a result, we can conclude that the left-hand side of the BAR becomes
\begin{equation}\label{eq:LHS result}
    -\E_{\pi}\left[\A f_{k,n}\left(X^\uu\right)  \right] \ge \frac{1}{J} \left( 1-\ww{k}{k} \right)    \E_{\pi}\left[  \left( \cc^{k}\b{k}^\T {Z}^{\uu} \right)^n   \right]  - \Theta_1  .
\end{equation}

In the following derivation, we aim to demonstrate that the right-hand side of the BAR is uniformly bounded. 

For each $\ell \leq M\wedge J$, the term that corresponds to jumps of external arrivals to station $\ell$ is
\begin{align}
    &\E_{e,\ell}\left[f_{k,n}\left(X^\uu+ \Delta_{e,\ell}\right) - f_{k,n}\left(X^\uu \right)  \right]\notag\\
    &\quad=\cc^{k(n-1)} \E_{e,\ell}\left[ \frac{1}{n+1} \left(  \left(\b{k}^\T  {Z}^{\uu} + \bb{k}{\ell} \right)^{n+1}-  \left(\b{k}^\T   {Z}^{\uu}  \right) ^{n+1}  \right)- \left(\b{k}^\T  {Z}^{\uu} + \bb{k}{\ell} \right)^n   \bb{k}{\ell}  {T}_{e,\ell}   \right]\notag\\
    &\qquad +\cc^{k(n-1)} \E_{e,\ell}\left[\left(  \left(\b{k}^\T  {Z}^{\uu} + \bb{k}{\ell} \right)^n  -  \left(\b{k}^\T   {Z}^{\uu}   \right)^n   \right)  h_{k} \left( R_{e}^\uu, R_{s}^\uu \right)  \right],\label{eq:RHS1}
\end{align}
Following the mean value theorem, there exists a random variable $\xi_{\ell,1}\in (0,1)$, such that the first term in \eqref{eq:RHS1} is
\begin{align*}
    & \cc^{k(n-1)}\E_{e,\ell}\left[ \frac{1}{n+1} \left(  \left(\b{k}^\T  {Z}^{\uu} + \bb{k}{\ell} \right)^{n+1}-  \left(\b{k}^\T   {Z}^{\uu}  \right)^{n+1}  \right) - \left(\b{k}^\T  {Z}^{\uu} + \bb{k}{\ell} \right)^n   \bb{k}{\ell}  {T}_{e,\ell}   \right]\\
    &\quad =\cc^{k(n-1)} \E_{e,\ell}\left[\bb{k}{\ell}  \left(\b{k}^\T  {Z}^{\uu} + \xi_{\ell,1} \bb{k}{\ell} \right)^{n} -\bb{k}{\ell} \left(\b{k}^\T  {Z}^{\uu} + \bb{k}{\ell} \right)^n     \right]\leq 0,
\end{align*}
where the omission of $T_{e,\ell}$ is due to the use of Lemma \ref{lmm: independent}  and $\E[T_{e,\ell}]=1$.
Similarly, the second term in \eqref{eq:RHS1} can be written in terms of a random variable $\xi_{\ell,2}\in(0,1)$:
\begin{align}
    &\cc^{k(n-1)} \E_{e,\ell}\left[\left(  \left(\b{k}^\T  {Z}^{\uu} + \bb{k}{\ell} \right)^n  -  \left(\b{k}^\T   {Z}^{\uu}   \right)^n   \right)  h_{k} \left( R_{e}^\uu, R_{s}^\uu \right)  \right] \notag \\
    &\quad = n\bb{k}{\ell}  \E_{e,\ell}\left[ \cc^{k(n-1)} \left(\b{k}^\T  {Z}^{\uu} + \xi_{\ell,2}\bb{k}{\ell} \right)^{n-1}   h_{k}\left({R}_e^\uu, {R}_s^\uu\right)  \right] \notag \\
    &\quad \leq n\bb{k}{\ell}  \E_{e,\ell}\left[ \cc^{k(n-1)} \left(\b{k}^\T  {Z}^{\uu} +  \bb{k}{\ell} \right)^{n-1}  \abs{h_{k}\left({R}_e^\uu, {R}_s^\uu\right)}  \right]. \label{eq:RHS1-2}
\end{align}
According to Statement~(\hyperlink{S4}{S4}) of the induction hypotheses, the final term in \eqref{eq:RHS1-2} can be bounded by a constant  $\Theta_2$, which is independent of $r\in (0,r_0)$ and $\ell \leq M\wedge J$.
Therefore, we have
\begin{equation} \label{eq:RHS1 result}
    \E_{e,\ell}\left[ f_{k,n} \left(X^\uu+ \Delta_{e,\ell}\right) - f_{k,n} \left(X^\uu\right) \right]\leq \Theta_2.
\end{equation} 
As $f_{k,n}$ is independent of states affected by external arrivals to station $\ell>M\wedge J$, the terms corresponding to jumps of external arrivals to station $\ell>M\wedge J$ are all zero.

For each $\ell\in \J$, the term corresponding to jumps of service completions at station $\ell$ is
\begin{align}
    &\E_{s,\ell}\left[ f_{k,n} \left(X^\uu+ \Delta_{s,\ell}\right) - f_{k,n} \left(X^\uu\right) \right]\notag\\
    &=  {\cc^{k(n-1)}} \E_{s,\ell}\left[\frac{1}{n+1}\left(\left(\b{k}^\T {Z}^{\uu} + \Delta_Z \right)^{n+1} -   \left(\b{k}^\T   {Z}^{\uu}   \right)^{n+1}\right)   +     \left(\b{k}^\T   {Z}^{\uu} + \Delta_Z \right)^n    \left( \bb{k}{\ell} - \ww{\ell}{k} \right)   T_{s,\ell}     \right]\notag\\
    &\quad +\cc^{k(n-1)}  \E_{s,\ell}\left[ \left(  \left(\b{k}^\T   {Z}^{\uu} + \Delta_Z \right)^n  -  \left(\b{k}^\T   {Z}^{\uu}   \right)  ^n \right) h_{k}\left({R}_e^\uu, {R}_s^\uu\right)  \right] \label{eq:RHS2},
\end{align} 
where we use the fact that $$u_\ell - w_{\ell k} = \begin{cases}
    0 & \text{if $\ell<k$}\\
    1 - w_{kk} & \text{if $\ell=k$}\\
    -w_{\ell k} & \text{if $\ell>k$}
\end{cases}$$ and $\Delta_Z=- \b{k}^\T\left( \e{\ell} - \route{\ell} \right)$ is the increment upon the jump. According to Lemma \ref{lmm: independent} and \ref{lmm: w}, the increment $\Delta_Z$ satisfies
\begin{equation}\label{eq:Z properties}
    \abs{\Delta_Z}\leq J, \quad \E[\Delta_Z]=-\bb{k}{\ell}+\ww{\ell}{k}, \quad \text{and $\Delta_Z$ is independent of $Z^\uu$}.
\end{equation}
By the mean value theorem, there exist random variables $\xi_{\ell, 3}\in (0,1)$, $\xi_{\ell,4}\in (0,\xi_{\ell,3})$ and $\xi_{\ell, 5}\in (0,1)$, such that the first term in \eqref{eq:RHS2} becomes
\begin{align}
     & {\cc^{k(n-1)}} \E_{s,\ell}\left[\frac{1}{n+1}\left(\left(\b{k}^\T {Z}^{\uu} + \Delta_Z \right)^{n+1} -   \left(\b{k}^\T   {Z}^{\uu}   \right)^{n+1}\right)   +     \left(\b{k}^\T   {Z}^{\uu} + \Delta_Z \right)^n    \left( \bb{k}{\ell} - \ww{\ell}{k} \right)   T_{s,\ell}     \right] \notag \\
     &\quad ={\cc^{k(n-1)}} \E_{s,\ell}\left[\Delta_Z\left(\b{k}^\T {Z}^{\uu} + \xi_{\ell,3}\Delta_Z \right)^{n}    +  \left( \bb{k}{\ell} - \ww{\ell}{k} \right)    \left(\b{k}^\T   {Z}^{\uu} + \Delta_Z \right)^n        \right] \notag \\
     &\quad ={\cc^{k(n-1)}} \E_{s,\ell}\left[\Delta_Z\left(\left(\b{k}^\T {Z}^{\uu} + \xi_{\ell,3}\Delta_Z \right)^{n} - \left(\b{k}^\T {Z}^{\uu}  \right)^{n} \right)  \right] \notag \\
     & \qquad + {\cc^{k(n-1)}} \E_{s,\ell}\left[ \Delta_Z \left(\b{k}^\T {Z}^{\uu} \right)^{n} +  \left( \bb{k}{\ell} - \ww{\ell}{k} \right)    \left(\b{k}^\T   {Z}^{\uu} + \Delta_Z \right)^n        \right] \notag\\
     &\quad ={\cc^{k(n-1)}} \E_{s,\ell}\left[\Delta_Z\left(\left(\b{k}^\T {Z}^{\uu} + \xi_{\ell,3}\Delta_Z \right)^{n} - \left(\b{k}^\T {Z}^{\uu}  \right)^{n} \right)  \right] \notag\\
     & \qquad + {\cc^{k(n-1)}} \left( \bb{k}{\ell} - \ww{\ell}{k} \right)    \E_{s,\ell}\left[   \left(\b{k}^\T   {Z}^{\uu} + \Delta_Z \right)^n   -  \left(\b{k}^\T {Z}^{\uu} \right)^{n}      \right] \notag \\
     &\quad ={\cc^{k(n-1)}} \E_{s,\ell}\left[n\xi_{\ell,3}\Delta_Z^2 \left(\b{k}^\T {Z}^{\uu} + \xi_{\ell,4}\Delta_Z \right)^{n-1} +  \left( \bb{k}{\ell} - \ww{\ell}{k} \right) n \Delta_Z   \left(\b{k}^\T   {Z}^{\uu} + \xi_{\ell, 5}\Delta_Z \right)^{n-1}         \right] \notag\\
     &\quad \leq {\cc^{k(n-1)}} \E_{s,\ell}\left[nJ^2 \left(\b{k}^\T {Z}^{\uu} + J \right)^{n-1} +  nJ^2   \left(\b{k}^\T   {Z}^{\uu} + J \right)^{n-1}         \right], \label{eq:RHS2-1}
\end{align}
where the first equality follows from the mean value theorem, Lemma \ref{lmm: independent} and $\E[T_{s,\ell}]=1$, the third equality is derived from \eqref{eq:Z properties}, and the inequality follows from the fact that $x\leq |x|$ for any $x$, $\abs{\Delta_Z}\leq J$ in \eqref{eq:Z properties} and $\abs{u_\ell - w_{\ell k}}\leq J$. According to Statement~(\hyperlink{S2}{S2}) of the induction hypotheses, \eqref{eq:RHS2-1} can be bounded by a constant $\Theta_3$, which is a constant independent of $r\in (0,r_0)$ and $\ell \in \J$.
Similar to \eqref{eq:RHS1-2}, we can bound the second term of \eqref{eq:RHS2} as follows:
\begin{align*}
    &\cc^{k(n-1)}  \E_{s,\ell}\left[ \left(  \left(\b{k}^\T   {Z}^{\uu} + \Delta_Z \right)^n  - \left(\b{k}^\T   {Z}^{\uu}   \right)^n   \right)  h_{k}\left({R}_e^\uu, {R}_s^\uu\right)  \right]\leq \Theta_4,
\end{align*}
where $\Theta_4$ is a constant independent of $r\in (0,r_0)$ and $\ell \in \J$, according to Statement~(\hyperlink{S4}{S4}) of the induction hypotheses.
Therefore, we can conclude that for each $\ell \in \J$, the term corresponding to a jump of service completions satisfies
\begin{equation} \label{eq:RHS2 result}
    \E_{s,\ell}\left[ f_{k,n}\left(X^\uu+ \Delta_{s,\ell}\right) - f_{k,n}\left(X^\uu\right) \right]\leq \Theta_3 + \Theta_4.
\end{equation}

In summary, it follows from \eqref{eq:LHS result}, \eqref{eq:RHS1 result}, \eqref{eq:RHS2 result} and BAR \eqref{eq:BAR} that
\begin{align} \label{eq: result S1}
    \frac{1}{J} \left( 1-w_{kk} \right)    \E_{\pi}\left[ \left(\cc^{k}\b{k}^\T {Z}^{\uu} \right)^n   \right]  -  \Theta_1 \leq \Theta_2\sum_{\ell=1}^{M\wedge J}\alpha_\ell + (\Theta_3 + \Theta_4)\sum_{\ell=1}^{ J}\lambda_\ell.
\end{align}
According to $w_{kk}<1$ in Lemma \ref{lmm: w} and $u\in \R_+^J, u_k=1$ in \eqref{eq:u}, we conclude that $ \E_{\pi}[ (\cc^{k}  {Z}^{\uu}_k )^n   ] $ is uniformly bounded for $r\in(0,r_0)$ with
$$
C_{k,n} \equiv \frac{J}{1-w_{kk}}  \left( \Theta_1  + \Theta_2\sum_{\ell=1}^{M\wedge J}\alpha_\ell + (\Theta_3 + \Theta_4)\sum_{\ell=1}^{ J}\lambda_\ell \right)< \infty.
$$

\subsection{Proof of (S2)}\label{subsec: S2}
Based on the induction hypotheses, we assume that Statement (\hyperlink{S1}{S1}) holds for any pair in $S_{k,n}\cup \{(k,n)\}$, and Statements (\hyperlink{S2}{S2})-(\hyperlink{S4}{S4}) are satisfied for any pair in $ S_{k,n}$. We will now demonstrate the validity of Statement (\hyperlink{S2}{S2}) for pair $(k,n)$ by substituting $f_{k,n,D}$ as defined in \eqref{eq:funcS2o} into the BAR \eqref{eq:BAR}.

Plugging $f_{k,n,D}$ into the BAR, the left-hand side becomes
\begin{align} \label{eq:LHS-D}
    -\E_{\pi}\left[\A f_{k,n,D}\left(X^\uu\right) \right]&=   \E_{\pi}\left[ \left( \cc^k Z_k^\uu \right)^n \left( {M\wedge J}+\sum_{j=1}^J  \I{Z_{j}^{\uu}>0}\right)  \right]\leq 2JC_{k,n},
\end{align}
where the last inequality follows from Statement (\hyperlink{S1}{S1}) of the induction hypotheses.

For each $\ell \leq M\wedge J$, the term on the right-hand side of BAR that corresponds to jumps of external arrivals to station $\ell$ becomes
\begin{align}
    &\E_{e,\ell}\left[f_{k,n,D}\left(X^\uu+ \Delta_{e,\ell}\right) - f_{k,n,D}\left(X^\uu \right)  \right]  \notag  \\
    &  =\E_{e,\ell}\left[     \left( \cc^kZ_k^\uu + \cc^k\e{\ell}_k \right)^n\left(  \psi_{1} +    T_{e,\ell}/\alpha_\ell  \right) -  \left( \cc^kZ_k^\uu \right)^n \psi_{1}   \right]  \geq \E_{e,\ell}\left[ \left(\cc^{k} Z_k^\uu   \right)^n   \right]   /\alpha_\ell, \label{eq:RHS1-D}
\end{align}
where the inequality follows from Lemma \ref{lmm: independent}, $\E[T_{e,\ell}]=1$, and the fact that the increment $\cc^k\e{\ell}_k$ is nonnegative.
As $f_{k,n,D}$ is independent of states affected by external arrivals to station $\ell > M\wedge J$, the terms corresponding to jumps of external arrivals to station $\ell>M\wedge J$ are all zero.

For each $\ell \in \J$, the term on the right-hand side of BAR, corresponding to jumps of service completions at station $\ell$, can be expressed as
\begin{align}
    &\E_{s,\ell}\left[f_{k,n,D}\left(X^\uu+ \Delta_{s,\ell}\right) - f_{k,n,D}\left(X^\uu \right)  \right]  \notag\\
    &\quad =\E_{s,\ell}\left[     \left( \cc^{k}Z_k^\uu +\cc^{k}\Delta_Z \right)^n \left( \psi_{1}   +         {T_{s,\ell}}/{\mu_{\ell}^\uu} \right)  - \left( \cc^{k}Z_k^\uu \right)^n \psi_{1}  \right] \notag  \\
    &\quad = \E_{s,\ell}\left[     \left( \left( \cc^{k}Z_k^\uu +\cc^{k}\Delta_Z \right)^n -  \left( \cc^{k}Z_k^\uu \right)^n \right) \left( \psi_{1}   +       1/{\mu_{\ell}^\uu} \right)  + \left( \cc^{k}Z_k^\uu \right)^n  /{\mu_{\ell}^\uu}  \right]\label{eq:RHS2-D} 
\end{align}
where $\Delta_Z \equiv - \e{\ell}_k + \Route{\ell}{k}$ is the increment of queue length upon the jump, which satisfies the property that $\abs{\Delta_Z}\leq 1$. The final equality holds due to Lemma \ref{lmm: independent}.
The first term in~\eqref{eq:RHS2-D} is of order $n-1$ and consequently uniformly bounded. This fact can be proved by applying the mean value theorem. Specifically, there exists a random variable $\xi_{\ell}\in (0,1)$ such that the first term in \eqref{eq:RHS2-D} is given by
\begin{align}
    &\E_{s,\ell}\left[     \left( \left( \cc^{k}Z_k^\uu +\cc^{k}\Delta_Z \right)^n -  \left( \cc^{k}Z_k^\uu \right)^n \right) \left( \psi_{1}   +       1/{\mu_{\ell}^\uu} \right)  \right] \notag \\
    &\quad = \E_{s,\ell}\left[      n \cc^{k}  \Delta_Z  \left( \cc^{k}Z_k^\uu +\xi_{\ell}\cc^{k}\Delta_Z \right)^{n-1} \left( \psi_{1}   +       1/{\mu_{\ell}^\uu} \right)    \right] \notag \\
    &\quad \geq - \E_{s,\ell}\left[      n \cc^{k}    \left( \cc^{k}Z_k^\uu + 1\right)^{n-1} \left( \psi_{1}   +       1/{\lambda_{\ell}} \right)    \right] , \label{eq:RHS2-D-1}
\end{align}
where the inequality follows from the fact that $x\geq -\abs{x}$ for any $x$, $\abs{\Delta_Z}\leq 1$, $\cc,\xi_{\ell}\in (0,1)$ and $\mu_\ell^\uu>\lambda_\ell$.  According to Statements (\hyperlink{S2}{S2}) and (\hyperlink{S4}{S4}) of the induction hypotheses, the final term in \eqref{eq:RHS2-D-1} can be further lower bounded by a constant  $-\Theta_5$, which is independent of $r\in (0,r_0)$ and $\ell\in \J$.
Thus, \eqref{eq:RHS2-D} becomes
\begin{align} \label{eq:RHS2-D result}
    \E_{s,\ell}\left[f_{k,n,D}\left(X^\uu+ \Delta_{s,\ell}\right) - f_{k,n,D}\left(X^\uu \right)  \right] &  \geq   \E_{s,\ell}\left[  \left( \cc^{k}Z_k^\uu \right)^n  /\mu_\ell^\uu\right] -\Theta_5\notag \\
    &\geq   \E_{s,\ell}\left[  \left( \cc^{k}Z_k^\uu \right)^n  /(\lambda_\ell + 1)\right] -\Theta_5,
\end{align}
where the last inequality follows from  the fact that  ${\mu}^\uu_\ell =   \lambda_\ell + r^\ell \leq \lambda_\ell+ 1$ in Assumption~\ref{assmpt: multiscale}.

In summary, it follows from \eqref{eq:LHS-D}, \eqref{eq:RHS1-D}, \eqref{eq:RHS2-D result} and BAR \eqref{eq:BAR} that
\begin{align*}
    2JC_{k,n} \geq \sum_{\ell =1}^{M\wedge J}  \E_{e,\ell}\left[ \left(\cc^{k} Z_k^\uu   \right)^n   \right]   + \sum_{\ell=1}^J \frac{\lambda_\ell}{\lambda_\ell+1}   \E_{s,\ell}\left[ \left( \cc^{k}Z_k^\uu \right)^n \right]-\Theta_5\sum_{\ell=1}^J \lambda_\ell .
\end{align*}
Therefore, Statement (\hyperlink{S2}{S2}) holds for pair $(n,k)$ and all $\cc \in (0,r_0)$ with
$$D_{k,n}\equiv  \left( 2JC_{k,n}  + \Theta_5\sum_{\ell=1}^J \lambda_\ell  \right)   \max_{\ell\in \J}  \frac{\lambda_\ell + 1}{\lambda_\ell } < \infty.$$

\subsection{Proof of (S3)}
Building upon the induction hypotheses, we assume that Statements (\hyperlink{S1}{S1}) and (\hyperlink{S2}{S2}) hold for any pair in $S_{k,n}\cup \{(k,n)\}$, while Statements (\hyperlink{S3}{S3}) and (\hyperlink{S4}{S4}) are satisfied for any pair in $ S_{k,n}$. In this section, we aim to prove Statement (\hyperlink{S3}{S3}) for the pair $(k,n)$.  Note that when $n=M$, Statement (\hyperlink{S3}{S3}) holds trivially due to Statement (\hyperlink{S1}{S1}).  Hence, in the subsequent analysis, we will solely focus on the cases when $1\leq n<M$.

Plugging $f_{k,n,E}$ \eqref{eq:funcS3o} into BAR \eqref{eq:BAR}, the left-hand side becomes
\begin{align} 
    &-\E_{\pi}\left[\A f_{k,n,E}\left(X^\uu\right) \right] \notag \\
    &\quad = (M-n+1) \E_{\pi}\left[ \left(\cc^{k}  Z_k^\uu \right)^n    \left(  \sum_{j=1}^{M\wedge J} \left(  R_{e,j}^\uu \right)^{M-n} + \sum_{j=1}^J  \left( R_{s,j}^\uu \right)^{M-n} \I{Z_{j}^{\uu}>0}  \right)\right] \label{eq:LHS-E}
\end{align}
For each $\ell \leq M\wedge J$, the term on the right-hand side of BAR that corresponds to jumps of external arrivals to station $\ell$ can be expressed as follows:
\begin{align}
    &\E_{e,\ell}\left[f_{k,n,E}\left(X^\uu+ \Delta_{e,\ell}\right) - f_{k,n,E}\left(X^\uu \right)  \right]  \notag\\
    &  \quad=\E_{e,\ell}\left[     \left(\cc^{k}  Z_k^\uu + \cc^{k}\e{\ell}_k \right)^n\left( \psi_{M-n+1} +  \left( 
    {T_{e,\ell}}/{\alpha_\ell} \right)^{M-n+1}\right)  -  \left(\cc^{k}  Z_k^\uu \right)^n  \psi_{M-n+1}   \right] \notag \\
    &  \quad = \E_{e,\ell}\left[    \left(  \left(\cc^{k}  Z_k^\uu + \cc^{k}\e{\ell}_k \right)^n - \left(\cc^{k}  Z_k^\uu \right)^n \right) \left( \psi_{M-n+1} + {\E\left[T_{e,\ell}^{M-n+1}   \right]}/{\alpha_\ell^{M-n+1}} \right)  \right., \notag\\
    &  \qquad   \left.   +    \left(\cc^{k}  Z_k^\uu  \right)^n  {\E\left[T_{e,\ell}^{M-n+1}   \right]}/{\alpha_\ell^{M-n+1}} \right], \label{eq:LHS-E-1}
\end{align}
where the last equality holds due to Lemma \ref{lmm: independent}. According to the condition \eqref{eq: moment condtion}, the mean value theorem and Statements (\hyperlink{S2}{S2}) and (\hyperlink{S4}{S4}) of the induction hypotheses, the final term in \eqref{eq:LHS-E-1} can be further bounded by a constant $\Theta_6$, which is independent of $r\in (0,r_0)$ and $\ell \leq M \wedge J$.
Consequently, we can conclude that
\begin{align} \label{eq:RHS1-E}
    \E_{e,\ell}\left[f_{k,n,E}\left(X^\uu+ \Delta_{e,\ell}\right) - f_{k,n,E}\left(X^\uu \right)  \right] \leq  \Theta_6,
\end{align}
As $f_{k,n,E}$ is independent of states affected by external arrivals to station $\ell > M\wedge J$, the terms corresponding to jumps of external arrivals to station $\ell>M\wedge J$ are all zero.

Furthermore, for each $\ell\in \J$, the term on the right-hand side of BAR corresponding to the jumps of the service completion at station $\ell$ can be expressed as
\begin{align}
    &\E_{s,\ell}\left[f_{k,n,E} \left(X^\uu+ \Delta_{s,\ell}\right) - f_{k,n,E} \left(X^\uu \right)  \right]  \notag\\
    &\quad =\E_{s,\ell}\left[     \left(\cc^{k}  Z_k^\uu + \cc^{k}\Delta_Z \right)^n\left( \psi_{M-n+1} +  \left(  {T_{s,\ell}}/{\mu_\ell^\uu} \right)^{M-n+1}\right)  -  \left(\cc^{k}  Z_k^\uu \right)^n  \psi_{M-n+1}   \right] \notag \\
    & \quad = \E_{s,\ell}\left[    \left(  \left(\cc^{k}  Z_k^\uu + \cc^{k}\Delta_Z \right)^n - \left(\cc^{k}  Z_k^\uu \right)^n \right) \left( \psi_{M-n+1} + {\E\left[T_{s,\ell}^{M-n+1}   \right]}/{\left(\mu_\ell^\uu\right)^{M-n+1}} \right) \right.\notag \\
    & \qquad \left. + \left(\cc^{k}  Z_k^\uu  \right)^n  {\E\left[T_{s,\ell}^{M-n+1}   \right]}/{\left(\mu_\ell^\uu\right)^{M-n+1}}\right],  \label{eq:LHS2-E-1}
\end{align} 
where  $\Delta_Z\equiv- \e{\ell}_k +\Route{\ell}{k}$ is the increment upon a jump satisfying $\abs{\Delta_Z}\leq 1$. 
Similar to \eqref{eq:LHS-E-1}, according to the condition \eqref{eq: moment condtion}, the mean value theorem, Statements (\hyperlink{S2}{S2}) and (\hyperlink{S4}{S4}) of the induction hypotheses and the fact that $\mu^\uu_\ell> \lambda_\ell$ for all $\ell\in \J$, the final term in \eqref{eq:LHS2-E-1} can be further bounded by a constant $\Theta_7$, which is independent of $r\in (0,r_0)$ and $\ell\in \J$.
Therefore, we can conclude that 
\begin{align} \label{eq:RHS2-E result}
    \E_{s,\ell}\left[f_{k,n,E} \left(X^\uu+ \Delta_{s,\ell}\right) - f_{k,n,E} \left(X^\uu \right)  \right] \leq  \Theta_7.
\end{align}

In summary, it follows from \eqref{eq:LHS-E}, \eqref{eq:RHS1-E}, \eqref{eq:RHS2-E result} and BAR \eqref{eq:BAR} that
\begin{align}
    & \E_{\pi}\left[ \left(\cc^{k}  Z_k^\uu \right)^n    \left(  \sum_{j=1}^{M\wedge J} \left(  R_{e,j}^\uu \right)^{M-n} + \sum_{j=1}^J  \left( R_{s,j}^\uu \right)^{M-n} \I{Z_{j}^{\uu}>0}  \right)\right]  \leq \frac{\Theta_6\sum_{\ell=1}^{M\wedge J}\alpha_\ell +\Theta_7\sum_{\ell=1}^{J}\lambda_\ell}{(M-n+1)}. \label{eq: S3-1}
\end{align}
As $R_{s,j}^\uu\overset{d}{=}T_{s,j} /\mu_j^\uu$ when the station $j$ is idle for any $j\in \J$, we have
\begin{align}
    &\E_{\pi}\left[ \left( \cc^{k} Z_k^\uu \right)^n \sum_{j=1}^J  \left( R_{s,j}^\uu \right)^{M-n} \I{Z_{j}^{\uu}=0}   \right]=\E_{\pi}\left[  \left(  \cc^{k}Z_k^\uu \right)^n \sum_{j=1}^J  \left(  \frac{T_{s,j}}{\mu_j^\uu}  \right)^{M-n}  \I{Z_{j}^{\uu}=0}   \right]  \notag \\
    &\quad\leq \E_{\pi}\left[ \left(\cc^{k}  Z_k^\uu \right)^n \right] \sum_{j=1}^J \left(\mu_j^\uu\right)^{-(M-n)} \E \left[  T_{s,j} ^{M-n}   \right] \leq   C_{k,n} \sum_{j=1}^J {\lambda_j}^{-(M-n)}\E \left[   T_{s,j} ^{M-n}   \right], \label{eq: S3-2}
\end{align}
where the last inequality holds due to Statement (\hyperlink{S2}{S2}) and $\mu_j^\uu> \lambda_j$ for all $j\in \J$.
Hence, by combining \eqref{eq: S3-1} and \eqref{eq: S3-2}, we can conclude that for any $r\in (0,r_0)$
\begin{align*}
    \E_{\pi}\left[  \left( \cc^{k}Z_k^\uu \right)^n \psi_{M-n}  \right]&=\E_{\pi}\left[  \left( \cc^{k}Z_k^\uu \right)^n \left(  \sum_{j=1}^{M\wedge J}  \left(  R_{e,j}^\uu  \right)^{M-n} +\sum_{j=1}^J   \left( R_{s,j}^\uu \right)^{M-n} \right)  \right] \\
    &  \leq \frac{\Theta_6\sum_{\ell=1}^{M\wedge J}\alpha_\ell +\Theta_7\sum_{\ell=1}^{J}\lambda_\ell}{(M-n+1)}+  C_{k,n} \sum_{j=1}^J {\lambda_j}^{-(M-n)}\E \left[   T_{s,j} ^{M-n}   \right] \equiv E_{k,n}< \infty.
\end{align*}
Therefore, Statement (\hyperlink{S3}{S3}) holds for pair $(n,k)$ and all $\cc \in (0,r_0)$ with $E_{k,n}$ as given above.

\subsection{Proof of (S4)}\label{subsec: S4}
Given the induction hypotheses, we assume that Statements (\hyperlink{S1}{S1})-(\hyperlink{S3}{S3}) hold for all pairs in the set $S_{k,n}\cup \{(k,n)\}$, and Statement (\hyperlink{S4}{S4}) is satisfied for all pairs in $S_{k,n}$. We will now prove Statement (\hyperlink{S4}{S4}) for the pair $(k,n)$. However, if $n=M$, Statement (\hyperlink{S4}{S4}) is trivially satisfied due to Statement (\hyperlink{S2}{S2}). Therefore, in the following analysis, we will focus on the cases where $1\leq n<M$.

Applying $f_{k,n,F}$ \eqref{eq:funcS4o} into BAR \eqref{eq:BAR}, the left-hand side becomes
\begin{align}
    &-\E_{\pi}\left[\A f_{k,n,F}\left(X^\uu\right) \right]= \E_{\pi}\left[ \left( \cc^{k}  Z_k^\uu \right)^n  \left(  {M\wedge J}  +\sum_{j=1}^J \I{Z_{j}^{\uu}>0} \right) \psi_{M-n}   \right] \notag\\
    &\qquad +(M-n) \E_{\pi}\left[ \left( \cc^{k}  Z_k^\uu \right)^n   \left( \sum_{j=1}^{M\wedge J} \left( R_{e,j}^\uu \right)^{M-n-1} +\sum_{j=1}^J \left( R_{s,j}^\uu \right)^{M-n-1} \I{Z_{j}^{\uu}>0} \right)   \psi_{1}  \right] \notag\\
    &\quad\leq    2J\E_{\pi}\left[ \left( \cc^{k}  Z_k^\uu \right)^n \psi_{M-n}   \right] + (M-n) \E_{\pi}\left[ \left(  \cc^{k}  Z_k^\uu \right)^n\psi_{M-n-1}   \psi_{1}  \right] \leq \left( 4J^2 M+ 2J \right) E_{k,n}, \label{eq: LHS}
\end{align}
where the last inequality holds due to the property stated in \eqref{eq: cross term bound} and Statement (\hyperlink{S3}{S3}) of the induction hypotheses.

For each $\ell \leq M\wedge J$, the term on the right-hand side of BAR that corresponds to jumps of external arrivals to station $\ell$ becomes
\begin{align}
    &\E_{e,\ell}\left[f_{k,n,F}\left(X^\uu+ \Delta_{e,\ell}\right) - f_{k,n,F}\left(X^\uu \right)  \right]\notag\\
    &\quad= \E_{e,\ell} \left[  \left(\cc^{k} Z_k^\uu + \cc^{k}\e{\ell}_k \right)^n\left(\psi_{M-n}   +   \left(   \frac{T_{e,\ell}}{\alpha_{\ell}} \right)^{M-n}    \right)\left( \psi_{1}  +     \frac{T_{e,\ell}}{\alpha_{\ell}}   \right) - \left(\cc^{k} Z_k^\uu  \right)^n \psi_{M-n} \psi_{1}  \right] \notag\\
    &\quad\geq \E_{e,\ell} \left[  \left( \left(\cc^{k} Z_k^\uu + \cc^{k}\e{\ell}_k \right)^n - \left(\cc^{k} Z_k^\uu \right)^n \right) \psi_{M-n}   \left( \psi_{1}  +    1/{\alpha_{\ell}}  \right) + \left(\cc^{k} Z_k^\uu  \right)^n \psi_{M-n} / \alpha_\ell  \right] \notag\\
    &\quad\geq \E_{e,\ell} \left[  \left(\cc^{k} Z_k^\uu  \right)^n \psi_{M-n}  \right] / \alpha_\ell  \label{eq:RHS1-I}
\end{align}
where the first inequality is obtained by neglecting the terms associated with $\left( T_{e,\ell}/\alpha_\ell \right)^{M-n}$ and using Lemma \ref{lmm: independent} and $\E[T_{e,\ell}]=1$,  and the second inequality is valid as $\cc^k \e{\ell}_k $ is nonnegative. As $f_{k,n,F}$ is independent of states affected by external arrivals to station $\ell > M\wedge J$, the terms corresponding to jumps of external arrivals to station $\ell>M\wedge J$ are all zero.

Furthermore, for each $\ell\in \J$, the term on the right-hand side of BAR, corresponding to jumps of service completions at station $\ell$, becomes
\begin{align}
    & \E_{s,\ell}\left[f_{k,n,F}\left(X^\uu+ \Delta_{s,\ell}\right) - f_{k,n,F}\left(X^\uu \right)  \right]\notag\\
    &\quad = \E_{s,\ell}\left[  \left(\cc^{k} Z_k^\uu + \cc^{k}\Delta_Z \right)^n\left(\psi_{M-n}   +   \left(   \frac{T_{s,\ell}}{\mu_{\ell}^\uu} \right)^{M-n}    \right)\left( \psi_{1}  +     \frac{T_{s,\ell}}{\mu_{\ell}^\uu}   \right) - \left(\cc^{k} Z_k^\uu  \right)^n \psi_{M-n} \psi_{1}  \right] \notag\\
    &\quad\geq \E_{s,\ell}\left[  \left( \left(\cc^{k} Z_k^\uu + \cc^{k}\Delta_Z \right)^n - \left(\cc^{k} Z_k^\uu \right)^n \right) \psi_{M-n}   \left( \psi_{1}  +   {1}/{ {\mu_{\ell}^\uu}}  \right) + \left(\cc^{k} Z_k^\uu  \right)^n {\psi_{M-n}  }/{\mu_{\ell}^\uu} \right] \label{eq:RHS2-I} 
\end{align}
where $\Delta_Z\equiv - \e{\ell}_k + \Route{\ell}{k}$ is the increment upon the jump satisfying $\abs{\Delta_Z}\leq 1$. The inequality is derived by disregarding the terms related to $( T_{s,\ell}/\mu_\ell^\uu )^{M-n}$ and utilizing $Z_k^\uu + \Delta_Z\geq 0$, Lemma \ref{lmm: independent} and $\E[T_{s,\ell}]=1$.
We show that the first term in \eqref{eq:RHS2-I} is of order $n-1$ in $\cc^k Z_k^\uu$ and order $M-n+1$ in the function $\psi$ using the mean value theorem. In particular, there exists a random variable $\xi_{\ell}\in (0,1)$ such that the first term in \eqref{eq:RHS2-I} is given by
\begin{align}
    &\E_{s,\ell}\left[     \left( \left( \cc^{k}Z_k^\uu +\cc^{k}\Delta_Z \right)^n -  \left( \cc^{k}Z_k^\uu \right)^n \right)  \psi_{M-n}   \left( \psi_{1}  +    {1}/{ {\mu_{\ell}^\uu}}  \right)  \right] \notag \\
    &\quad = \E_{s,\ell}\left[      n \cc^{k}  \Delta_Z  \left( \cc^{k}Z_k^\uu +\xi_{\ell}\cc^{k}\Delta_Z \right)^{n-1}  \left(  \psi_{M-n}  \psi_{1}  +    { \psi_{M-n}  }/{ {\mu_{\ell}^\uu}}  \right)     \right] \notag \\
    &\quad \geq - \E_{s,\ell}\left[      n \cc^{k}    \left( \cc^{k}Z_k^\uu + 1\right)^{n-1} \left( 4J^2\psi_{M-n+1}   +       \psi_{M-n}/{{\mu_{\ell}^\uu}} \right)    \right], \label{eq:RHS2-I-1}
\end{align}
where the inequality follows from the fact that $x\geq -\abs{x}$ for any $x$, $\abs{\Delta_Z}\leq 1$ and $\cc,\xi_{\ell}\in (0,1)$. According to Statement (\hyperlink{S4}{S4}) of the induction hypotheses, the final term in \eqref{eq:RHS2-I-1} can be further lower bounded by a constant $-\Theta_8$, which is independent of $r \in (0, r_0)$ and $\ell \in \J$.
Therefore, we have
\begin{align} 
    \E_{s,\ell}\left[f_{k,n,F} \left(X^\uu+ \Delta_{s,\ell}\right) - f_{k,n,F} \left(X^\uu \right)  \right] & \geq   \E_{s,\ell}\left[ \left(  \cc^{k}  Z_k^\uu  \right)^n \psi_{M-n}/{\mu_{\ell}^\uu} \right]   - \Theta_8 \notag \\
    & \geq   \E_{s,\ell}\left[ \left(  \cc^{k}  Z_k^\uu  \right)^n \psi_{M-n} \right]  / (\lambda_\ell + 1)  - \Theta_8 ,\label{eq:RHS2-I result} 
\end{align}
where the last inequality follows from  the fact that  ${\mu}^\uu_\ell =   \lambda_\ell + r^\ell \leq \lambda_\ell+ 1$ in Assumption~\ref{assmpt: multiscale}.

In summary, it follows from \eqref{eq: LHS}, \eqref{eq:RHS1-I}, \eqref{eq:RHS2-I result} and BAR \eqref{eq:BAR} that
\begin{align*}
    & \sum_{\ell =1}^{M \wedge J}\E_{e,\ell}\left[   \left( \cc^kZ_k^\uu   \right)^n \psi_{M-n} \right] +\sum_{\ell=1}^J     \frac{\lambda_\ell}{\lambda_\ell+1}     \E_{s,\ell}\left[  \left( \cc^k Z_k^\uu  \right)^n \psi_{M-n} \right] \leq \left( 4J^2 M + 2J \right) E_{k,n} + \Theta_8\sum_{\ell=1}^J \lambda_\ell.
\end{align*}
Therefore, Statement (\hyperlink{S4}{S4}) holds for pair $(n,k)$ and all $\cc \in (0,r_0)$ with
$$
F_{k,n} \equiv \left( \left( 4J^2 M + 2J \right) E_{k,n} + \Theta_8\sum_{\ell=1}^J \lambda_\ell \right) \max_{\ell\in \J} \frac{\lambda_\ell+1}{\lambda_\ell}< \infty.
$$

\subsection{Truncated Test Functions}\label{sec: truncation}
To make the proof of Theorem \ref{thm:Ckn} more rigorous, we need to replace the unbounded test functions, as given by \eqref{eq:funcS1o} and \eqref{eq:funcS2o}-\eqref{eq:funcS4o}, with their truncated counterparts in the proof of Statements (\hyperlink{S1}{S1})-(\hyperlink{S4}{S4}). Initially, we will introduce the truncated test functions. Subsequently, we will demonstrate in Lemma~\ref{lmm: g properties} that these truncated test functions are ``similar" to the original unbounded test functions, in the sense that they also satisfy the desirable properties.   This allows us to assert that the derivations in Sections \ref{subsec: bounds for residual times}-\ref{subsec: S4} remain valid even when applied to the truncated test functions.

\paragraph{Truncated test functions.}
To handle the residual times in the test function, we truncate them by setting an upper bound $\kappa \in (0, \infty)$ following \citet{BravDaiMiya2017}, which we call the ``hard truncation". The truncated versions of functions $\psi_n(\cdot)$ in \eqref{eq: function psi} and $h_k(\cdot)$ in \eqref{eq:h}, indexed by $\kappa>0$, are defined as follows:
\begin{align*}
    \psi_{n}^\fk\left(r_e, r_s \right) &= \sum_{j=1}^{M\wedge J} \left(  r_{e,j} \wedge \kappa \right)^{n} +  \sum_{j=1}^J  \left( r_{s,j} \wedge \kappa \right)^{n},\\
	h^\fk_{k}\left({r}_e, {r}_s \right) &=  -  \sum_{j=1}^k  \bb{k}{j}  \alpha_j\left( {r}_{e,j} \wedge \kappa \right) +    {\mu}^\uu_k \left( {r}_{s,k}\wedge \kappa \right)  -\sum_{j=k}^{J} w_{jk} {\mu}^\uu_j \left( r_{s,j} \wedge \kappa \right),
\end{align*}
with for any $r_e,r_s\geq 0$, $\psi_{n}^\fk(r_e, r_s) \leq \psi_{n}(r_e, r_s)$ and $\psi_{n}^\fk(r_e, r_s) \rightarrow \psi_{n}(r_e, r_s)$ as $\kappa$ goes to infinity. We can check that for all $\kappa>0$, both functions $\psi_n^\fk$ and $h_k^\fk $ belong to $\D$. 

To truncate the queue length, we employ a ``soft truncation" incorporating an exponentially decaying function. 
Specifically, the soft-truncated version of the polynomial functions and the corresponding integral functions are given by:
\begin{equation*}
g^\fk_{n}(z) = z^n \exp(-z/\kappa), \qquad G_{n}^\fk\left(z\right) = \int_0^z  g^\fk_{n}(y) dy=\int_0^z y^n \exp(-y/\kappa) dy,
\end{equation*}
with for any $z\geq 0$, $g^\fk_{n}(z)\leq z^n$ and $g^\fk_{n}(z) \to z^n$ as $\kappa$ goes to infinity. 
We can check that for all $\kappa>0$, both functions $g_n^\fk$ and $G_n^\fk $ belong to $\D$. 

In summary, the truncated test functions, corresponding to \eqref{eq:funcS1o} and \eqref{eq:funcS2o}-\eqref{eq:funcS4o}, are as follows: for $x = (z, r_e, r_s) \in \mathbb{S}$,
\begin{align*}
	f^\fk_{k,n}\left(x\right) &= \cc^{k(n-1)}  G^\fk_{n}\left(\b{k}^\T {z}\right) + \cc^{k(n-1)}  g^\fk_{n}\left(\b{k}^\T z\right) h^\fk_{k}\left(r_e, r_s \right),\\
	f^{\fk}_{k,n,D}\left(x\right) &= \cc^{kn} g^\fk_{n}\left(z_k\right) \psi_{1}^\fk\left(r_e,r_s\right),\\
	f^\fk_{k,n,E}\left(x\right) &=  \cc^{kn} g^\fk_{n}\left(z_k\right)   \psi_{M-n+1}^\fk\left(r_e,r_s\right),\\
	f^{\fk}_{k,n,F}\left(x\right) &= \cc^{kn} g^\fk_{n}\left(z_k\right) \psi_{M-n}^\fk \left(r_e,r_s\right) \psi_{1}^\fk\left(r_e,r_s\right).
\end{align*}

The primary reason for using soft-truncated functions is to maintain smoothness, so that the mean value theorem in Sections \ref{subsec: bounds for residual times}-\ref{subsec: S4}, can still be applied. In the following lemma, we show that the truncated functions $g_n^\fk$ and $G_n^\fk$ indeed have similar properties as the polynomial terms $z^n$ and $z^{n+1}/(n+1)$ in the unbounded test functions.  

\begin{lemma}\label{lmm: g properties} For all $z\in\R_+$, the following properties are true:
	\begin{enumerate}[(i)]
		\item  
         $g_{n}^\fk(z) \leq \left( \kappa n/e \right)^n $.
		\item $\abs{g_{n}^\fk(z+c) - g_{n}^\fk(z)}\leq (n+1)|c|(z+|c|)^{n-1} $, for any real number $c\geq -z$.
		\item $G_{n}^\fk(z+c) - G_{n}^\fk(z)\leq c(z+ c)^n\exp\left( -z / \kappa \right) $, for any real number $c\geq -z$.
	\end{enumerate}
\end{lemma}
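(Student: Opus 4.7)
The three properties are elementary but the bookkeeping requires some care. The plan is to handle them in order, using direct calculus throughout.

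For part (i), I would write $g_n^\kappa(z)=z^n e^{-z/\kappa}$ and differentiate: $\frac{d}{dz}g_n^\kappa(z)=(n-z/\kappa)z^{n-1}e^{-z/\kappa}$. The unique critical point on $(0,\infty)$ is $z=n\kappa$, and since $g_n^\kappa(0)=0$ and $g_n^\kappa(z)\to 0$ as $z\to\infty$, this critical point is the global maximum. Evaluating gives $g_n^\kappa(n\kappa)=(n\kappa)^n e^{-n}=(\kappa n/e)^n$, which yields the claim.

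For part (ii), I would use the mean value theorem: $g_n^\kappa(z+c)-g_n^\kappa(z)=c\,(g_n^\kappa)^\prime(\xi)$ for some $\xi$ between $z$ and $z+c$. Since $c\geq -z$, we have $\xi\in[0,z+|c|]$. The derivative satisfies
\begin{equation*}
\bigl|(g_n^\kappa)^\prime(y)\bigr|\leq n y^{n-1} e^{-y/\kappa}+\frac{y^n}{\kappa}e^{-y/\kappa}
=n y^{n-1}e^{-y/\kappa}+y^{n-1}\cdot \frac{y}{\kappa}e^{-y/\kappa}.
\end{equation*}
The first term is bounded by $n y^{n-1}$ since $e^{-y/\kappa}\leq 1$. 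For the second term, the elementary inequality $\frac{y}{\kappa}e^{-y/\kappa}\leq 1/e\leq 1$ (obtained by maximizing $te^{-t}$ at $t=1$) gives a bound of $y^{n-1}$. Thus $|(g_n^\kappa)^\prime(\xi)|\leq (n+1)\xi^{n-1}\leq (n+1)(z+|c|)^{n-1}$, which combined with MVT gives the claim.

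For part (iii), I would split into cases. When $c\geq 0$, write $G_n^\kappa(z+c)-G_n^\kappa(z)=\int_z^{z+c} y^n e^{-y/\kappa}dy$; on this interval $y^n\leq (z+c)^n$ and $e^{-y/\kappa}\leq e^{-z/\kappa}$ by monotonicity, giving the integral $\leq c(z+c)^n e^{-z/\kappa}$. When $-z\leq c<0$, the right-hand side $c(z+c)^n e^{-z/\kappa}$ is nonpositive while $G_n^\kappa(z+c)-G_n^\kappa(z)=-\int_{z+c}^z y^n e^{-y/\kappa}dy\leq 0$, so the inequality reduces to $\int_{z+c}^z y^n e^{-y/\kappa}dy\geq |c|(z+c)^n e^{-z/\kappa}$; here $y\geq z+c$ gives $y^n\geq (z+c)^n$ and $y\leq z$ gives $e^{-y/\kappa}\geq e^{-z/\kappa}$, which yields the bound.

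The only mildly nontrivial step is the derivative bound in part (ii); in particular, the small trick of using $\tfrac{y}{\kappa}e^{-y/\kappa}\leq 1/e$ is what allows the clean factor of $(n+1)$ rather than something that blows up in $\kappa$ or $y$. Everything else is routine maximization or monotonicity.
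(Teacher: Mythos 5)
Your proof is correct and follows essentially the same elementary calculus route as the paper: part (i) is the maximization of $x^n e^{-x}$, part (ii) is the mean value theorem combined with the bound $\tfrac{y}{\kappa}e^{-y/\kappa}\le 1$ exactly as in the paper, and part (iii) reaches the same monotonicity conclusion (via the integral representation with a sign case split rather than the paper's single MVT application, a cosmetic difference). No gaps.
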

\begin{proof}[Proof of Lemma \ref{lmm: g properties}]
    Property (i) follows from the fact that 
    $x^n\exp(-x)\le (n/e)^n$ for all $x\ge 0$. For property (ii), we can check that
    \begin{align*}
        &\abs{g_{n}^\fk(z+c) - g_{n}^\fk(z)} = \abs{c  \dot{g}_{n}^\fk(z+\theta c) } = \abs{c \left( n(z+\theta c)^{n-1}  - \kappa^{-1}(z+\theta c)^n \right) \exp(-(z+\theta c)/\kappa) }\\
        &\quad \leq n\abs{c}  (z+\theta c)^{n-1} + \abs{c}  (z+\theta c)^{n-1} = (n+1)\abs{c}  (z+\theta c)^{n-1}  \leq (n+1) \abs{c} (z+\abs{c})^{n-1} ,   
    \end{align*} 
    where $\dot{g}_n^\fk$ is the derivative of $g_n^\fk$, and $\theta$ is a constant in $(0,1)$ by the mean value theorem. The first inequality holds as $x\exp(-x)\leq 1$ for $x=(z+\theta c)/\kappa$. For property (iii), we can check that
    \begin{align*}
        G_{n}^\fk(z+c) - G_{n}^\fk(z) = c  g_{n}^\fk(z+\bar{\theta} c) = c(z+\bar{\theta} c)^n\exp\left( -(z+\bar{\theta} c) / \kappa \right) \leq c(z+ c)^n\exp\left( -z / \kappa \right),
    \end{align*}
    where the constant $\bar{\theta}\in (0,1)$ is derived by the mean value theorem.
\end{proof}

\paragraph{Applying truncated test functions to BAR.} 
We next describe the necessary adjustments in the proofs of Sections \ref{subsec: bounds for residual times}-\ref{subsec: S4} when replacing unbounded test functions with truncated versions. For illustrative purposes, we focus on the proof of Statement (\hyperlink{S1}{S1}) in Section \ref{subsec: proof of S1}, since the modifications for Statements (\hyperlink{S2}{S2})-(\hyperlink{S4}{S4}) are less complex and follow a similar rationale. A comparative analysis with Section \ref{subsec: proof of S1} is provided below.

Applying $f_{k,n}^{(\kappa)}$ to BAR \eqref{eq:BAR}, the left-hand side becomes
\begin{align}
	&-\E_{\pi}\left[\A f_{k,n}^{(\kappa)}\left( X^\uu \right)  \right]=-\E_{\pi}\left[\cc^{k(n-1)} g_n^\fk\left(\b{k}^\T Z^{\uu} \right) \A h_{k}^\fk\left( R_{e}^\uu, R_{s}^\uu \right)  \right]\notag\\
	&\quad=\E_{\pi}\left[\cc^{k(n-1)} g_n^\fk\left(\b{k}^\T Z^{\uu} \right)  \left( -\sum_{j=1}^k \bb{k}{j}  \alpha_j \I{R_{e,j}^\uu\leq \kappa}   +   {\mu}^\uu_k \I{Z_k^\uu>0, R_{s,k}^\uu \leq \kappa} \right.\right. \notag \\
    & \qquad\qquad\qquad\qquad\qquad\qquad\qquad \left.\left. -  \sum_{j=k}^{J} \ww{j}{k}  {\mu}^\uu_j \I{Z_j^\uu>0, R_{s,j}^\uu\leq \kappa}  \right)     \right]\notag\\
	&\quad\geq 
    \E_{\pi}\left[\cc^{k(n-1)} g_n^\fk\left(\b{k}^\T Z^{\uu} \right)  \left(  \left(  - \sum_{j=1}^k  \bb{k}{j}  \alpha_j +  {\mu}^\uu_k - \sum_{j=k}^{J} \ww{j}{k}  {\mu}^\uu_j \right)  \right.\right. \notag \\
    & \qquad\qquad\qquad\qquad\qquad\qquad\qquad \left.\left.  -   {\mu}^\uu_k    \left( 1- \I{Z_k^\uu>0, R_{s,k}^\uu \leq \kappa} \right) \right)    \right]\notag \\
    &\quad \geq  \E_{\pi}\left[\cc^{k(n-1)} g_n^\fk\left(\b{k}^\T Z^{\uu} \right)  \left(  \frac{1}{J}(1-w_{kk})r^k  -   {\mu}^\uu_k     \I{Z_k^\uu=0} -   {\mu}^\uu_k     \I{R_{s,k}^\uu>\kappa} \right)    \right], \label{eq:LHS truncate}
\end{align}
where the last inequality follows from Lemma \ref{lmm: h simplify} and $$1- \I{Z_k^\uu>0, R_{s,k}^\uu \leq \kappa} \leq  \I{Z_k^\uu=0}+     \I{R_{s,k}^\uu>\kappa}.$$ 
Compared with \eqref{eq:LHS} in Section \ref{subsec: proof of S1}, the first two terms of \eqref{eq:LHS truncate} are similar, but the last term of \eqref{eq:LHS truncate} is extra, resulting from the hard truncation of the residual times. Since $g_n^\fk(z)\leq z^n$ for any $z\geq0$, the second term of \eqref{eq:LHS truncate} can be bounded by the constant $\Theta_1$, as detailed in \eqref{eq:indicator}. The last term of \eqref{eq:LHS truncate} can be bounded as follows:
\begin{align*}
	&\E_{\pi}\left[ \cc^{k(n-1)}  {\mu}^\uu_k g_{n}^\fk\left(\b{k}^\T {Z}^{\uu} \right)   \I{R_{s,k}^\uu\ge \kappa}     \right]\leq   \cc^{k(n-1)}   {\mu}^\uu_k \E_{\pi}\left[  (\kappa n/e)^n \I{R_{s,k}^\uu\geq \kappa}      \right] \\
    &\quad \le \left( \lambda_k + 1 \right)  (n/e)^n\E_{\pi}\left[ \left( R_{s,k}^\uu \right)^n \right]\leq  \left( \lambda_k + 1 \right)  (n/e)^n E_{1,0},
\end{align*}
where the first inequality follows from property (i) of Lemma \ref{lmm: g properties} for the truncated polynomials, the second inequality holds due to Markov's inequality for the tail of residual times, and the last inequality follows from Statement~(\hyperlink{S3}{S3}) of the induction hypotheses. As a result, we can conclude that the left-hand side of the BAR becomes
\begin{equation}\label{eq:LHS result truncate}
    -\E_{\pi}\left[\A f_{k,n}^\fk\left(X^\uu\right)  \right] \ge \frac{1}{J} \left( 1-\ww{k}{k} \right)    \E_{\pi}\left[ \cc^{kn} g_n^\fk\left(\b{k}^\T Z^{\uu} \right)   \right]  - \Theta_1 -  \left( \lambda_k + 1 \right)  (n/e)^n E_{1,0}.
\end{equation}

For each $\ell \leq M\wedge J$, the term that corresponds to jumps of external arrivals to station $\ell$ is
\begin{align}
    &\E_{e,\ell}\left[f^\fk_{k,n}\left(X^\uu+ \Delta_{e,\ell}\right) - f^\fk_{k,n}\left(X^\uu \right)  \right]\notag\\
    &\quad=\cc^{k(n-1)} \E_{e,\ell}\left[ \left(  G_n^\fk\left(\b{k}^\T  {Z}^{\uu} + \bb{k}{\ell} \right) -  G_n^\fk\left(\b{k}^\T   {Z}^{\uu}  \right)  \right)- g_n^\fk\left(\b{k}^\T  {Z}^{\uu} + \bb{k}{\ell} \right)   \bb{k}{\ell} \left(  {T}_{e,\ell}  \wedge \alpha_\ell\kappa \right)   \right]\notag\\
    &\qquad +\cc^{k(n-1)} \E_{e,\ell}\left[\left(  g_n^\fk\left(\b{k}^\T  {Z}^{\uu} + \bb{k}{\ell} \right)  -  g_n^\fk\left(\b{k}^\T   {Z}^{\uu}   \right)   \right)  h_{k} \left( R_{e}^\uu, R_{s}^\uu \right)  \right] \notag\\
    &\quad=\cc^{k(n-1)} \E_{e,\ell}\left[ \left(  G_n^\fk\left(\b{k}^\T  {Z}^{\uu} + \bb{k}{\ell} \right) -  G_n^\fk\left(\b{k}^\T   {Z}^{\uu}  \right)  \right)- g_n^\fk\left(\b{k}^\T  {Z}^{\uu} + \bb{k}{\ell} \right)   \bb{k}{\ell}   {T}_{e,\ell}    \right]\notag\\
    &\qquad +\cc^{k(n-1)} \E_{e,\ell}\left[\left(  g_n^\fk\left(\b{k}^\T  {Z}^{\uu} + \bb{k}{\ell} \right)  -  g_n^\fk\left(\b{k}^\T   {Z}^{\uu}   \right)   \right)  h_{k} \left( R_{e}^\uu, R_{s}^\uu \right)  \right]\notag\\
    &\qquad + \cc^{k(n-1)} \E_{e,\ell}\left[ g_n^\fk\left(\b{k}^\T  {Z}^{\uu} + \bb{k}{\ell} \right)   \bb{k}{\ell} \left( {T}_{e,\ell} -{T}_{e,\ell}  \wedge \alpha_\ell\kappa \right)   \right]  \label{eq:RHS1-1 truncate}
\end{align}
Compared with \eqref{eq:RHS1} in Section \ref{subsec: proof of S1}, the first two terms can similarly be bounded by the mean value theorem, as detailed in Lemma \ref{lmm: g properties}. The last term, which is an extra term, can also be bounded later. The first term of \eqref{eq:RHS1-1 truncate} becomes
\begin{align}
    &\cc^{k(n-1)} \E_{e,\ell}\left[     G_n^\fk\left(\b{k}^\T  {Z}^{\uu} + \bb{k}{\ell} \right) -  G_n^\fk\left(\b{k}^\T   {Z}^{\uu}  \right)  -g_n^\fk\left(\b{k}^\T  {Z}^{\uu} + \bb{k}{\ell} \right)   \bb{k}{\ell}    {T}_{e,\ell}   \right] \notag \\
    &\quad \leq  \cc^{k(n-1)}  \bb{k}{\ell}  \E_{e,\ell}\left[ \left(\b{k}^\T  {Z}^{\uu} +   \bb{k}{\ell} \right)^n\exp\left(-  \b{k}^\T  {Z}^{\uu}  /\kappa \right) - g_n^\fk\left(\b{k}^\T  {Z}^{\uu} + \bb{k}{\ell} \right)     \right]\notag\\
    & \quad = \cc^{k(n-1)} \bb{k}{\ell}  \E_{e,\ell}\left[ \left( \left(\b{k}^\T  {Z}^{\uu} +   \bb{k}{\ell} \right)^n - \left(\b{k}^\T  {Z}^{\uu}  \right)^n \right)\exp\left(-  \b{k}^\T  {Z}^{\uu}  /\kappa \right)   \right]  \notag \\
    & \qquad + \cc^{k(n-1)} \bb{k}{\ell}  \E_{e,\ell}\left[ g_{n}^\fk\left(  \b{k}^\T  {Z}^{\uu} \right)- g_n^\fk\left(\b{k}^\T  {Z}^{\uu} + \bb{k}{\ell} \right)   \right] \notag \\
    & \quad \leq \cc^{k(n-1)} \bb{k}{\ell}  \E_{e,\ell}\left[ \left( \left(\b{k}^\T  {Z}^{\uu} +   \bb{k}{\ell} \right)^n - \left(\b{k}^\T  {Z}^{\uu}  \right)^n \right)  + (n+1)  \bb{k}{\ell} \left(\b{k}^\T  {Z}^{\uu} + \bb{k}{\ell} \right)^{n-1}\right], \label{eq:truncate-RHS}
\end{align}
where the first inequality follows from Lemma \ref{lmm: independent}, $\E[T_{e,\ell}]=1$ and property (iii) in Lemma~\ref{lmm: g properties}, the last inequality holds due to $x\leq |x|$ for any $x$ and property (ii) in Lemma \ref{lmm: g properties}. According to Statement (\hyperlink{S2}{S2}) of the induction hypotheses, the final term in \eqref{eq:truncate-RHS} can also be bounded by a constant $\Theta_9$, which is independent of $r \in (0, r_0)$, $\kappa>0$ and $\ell \leq M \wedge J$. Similarly, the second term in \eqref{eq:RHS1-1 truncate} can be written as
\begin{align}
    &\cc^{k(n-1)} \E_{e,\ell}\left[\left( g_n^\fk \left(\b{k}^\T  {Z}^{\uu} + \bb{k}{\ell} \right)  - g_n^\fk  \left(\b{k}^\T   {Z}^{\uu}   \right)   \right)  h_{k} \left( R_{e}^\uu, R_{s}^\uu \right)  \right] \notag \\
    &\quad \leq (n+1)\bb{k}{\ell}  \E_{e,\ell}\left[ \cc^{k(n-1)} \left(\b{k}^\T  {Z}^{\uu} +  \bb{k}{\ell} \right)^{n-1}  \abs{h_{k}\left({R}_e^\uu, {R}_s^\uu\right)}  \right]. \label{eq:RHS1-2 truncate}
\end{align}
According to Statement~(\hyperlink{S4}{S4}) of the induction hypotheses, the final term in \eqref{eq:RHS1-2 truncate} can be bounded by a constant  $\Theta_{10}$, which is independent of $r\in (0,r_0)$, $\kappa>0$ and $\ell \leq M\wedge J$. The last term of \eqref{eq:RHS1-1 truncate} can be bounded as follows:
\begin{align}
    &\cc^{k(n-1)} \E_{e,\ell}\left[  g_{n}^\fk\left(\b{k}^\T  {Z}^{\uu} + \bb{k}{\ell} \right) \left(  {T}_{e,\ell}-  {T}_{e,\ell}\wedge \kappa \alpha_\ell \right)\right] \notag\\
    &\quad\leq \E_{e,\ell}\left[   g_{n}^\fk\left(\b{k}^\T  {Z}^{\uu} + \bb{k}{\ell} \right)    {T}_{e,\ell} \I{  {T}_{e,\ell}\geq  \kappa \alpha_\ell  }  \right]\leq \E \left[  (\kappa n/e)^n  {T}_{e,\ell} \I{  {T}_{e,\ell}\geq  \kappa \alpha_\ell  }  \right] \notag \\
    &\quad \leq  (n/e)^n\E \left[   {T}^{n+1}_{e,\ell} /\alpha_\ell^n \I{  {T}_{e,\ell}\geq  \kappa \alpha_\ell  }  \right]\leq  (n/e)^n\E \left[   {T}^{n+1}_{e,\ell}   \right]/\alpha_\ell^n, \label{eq:RHS1 time}
\end{align}
where the second inequality follows from property (i) of Lemma \ref{lmm: g properties}.
Therefore, we have
\begin{equation} \label{eq:RHS1 result truncate}
    \E_{e,\ell}\left[ f_{k,n} \left(X^\uu+ \Delta_{e,\ell}\right) - f_{k,n} \left(X^\uu\right) \right]\leq \Theta_9 + \Theta_{10}+ (n/e)^n\E \left[   {T}^{n+1}_{e,\ell}   \right]/\alpha_\ell^n.
\end{equation} 
As $f_{k,n}^\fk$ is independent of states affected by external arrivals to station $\ell>M\wedge J$, the terms corresponding to jumps of external arrivals to station $\ell>M\wedge J$ are all zero.

For each $\ell\in \J$, the term corresponding to jumps of service completions at station $\ell$ is
\begin{align}
    &\E_{s,\ell}\left[ f_{k,n} \left(X^\uu+ \Delta_{s,\ell}\right) - f_{k,n} \left(X^\uu\right) \right]\notag\\
    &=  {\cc^{k(n-1)}} \E_{s,\ell}\left[\left(G_n^\fk\left(\b{k}^\T {Z}^{\uu} + \Delta_Z \right)  -   G_n^\fk\left(\b{k}^\T   {Z}^{\uu}   \right) \right)   +     g_n^\fk\left(\b{k}^\T   {Z}^{\uu} + \Delta_Z \right)    \left( \bb{k}{\ell} - \ww{\ell}{k} \right)   T_{s,\ell}     \right]\notag\\
    &\quad +\cc^{k(n-1)}  \E_{s,\ell}\left[ \left( g_n^\fk \left(\b{k}^\T   {Z}^{\uu} + \Delta_Z \right)  -  g_n^\fk\left(\b{k}^\T   {Z}^{\uu}   \right)   \right) h_{k}\left({R}_e^\uu, {R}_s^\uu\right)  \right]\notag\\
    &\quad +  {\cc^{k(n-1)}} \E_{s,\ell}\left[ g_n^\fk\left(\b{k}^\T   {Z}^{\uu} + \Delta_Z \right)^n    \left( -\bb{k}{\ell} + \ww{\ell}{k} \right)  \left(  T_{s,\ell} - T_{s,\ell} \wedge \kappa \mu_\ell^\uu \right)     \right]  \label{eq: RHS2 truncate}
\end{align} 
Compared with \eqref{eq:RHS2} in Section \ref{subsec: proof of S1}, the first two terms can be bounded using the mean value theorem, as similarly demonstrated in \eqref{eq:truncate-RHS} and \eqref{eq:RHS1-2 truncate}. The last term can be similarly bounded following \eqref{eq:RHS1 time}. Consequently, we can conclude that \eqref{eq: RHS2 truncate} can be bounded by a constant $\Theta_{11}$, which is independent of $r \in (0, r_0)$, $\kappa>0$ and $\ell \leq \J$:
\begin{equation} \label{eq:RHS2 result truncate}
    \E_{s,\ell}\left[ f_{k,n} \left(X^\uu+ \Delta_{s,\ell}\right) - f_{k,n} \left(X^\uu\right) \right]\leq \Theta_{11}.
\end{equation} 

In summary, it follows from \eqref{eq:LHS result truncate}, \eqref{eq:RHS1 result truncate}, \eqref{eq:RHS2 result truncate}, and BAR \eqref{eq:BAR} that
\begin{align*} 
    \frac{1}{J} \left( 1-\ww{k}{k} \right)   & \E_{\pi}\left[ \cc^{kn} g_n^\fk\left(\b{k}^\T Z^{\uu} \right)   \right]  - \Theta_1 -  \left( \lambda_k + 1 \right)  (n/e)^n E_{1,0} \\
    & \leq \sum_{\ell=1}^{M\wedge J}\alpha_\ell\left( \left( \Theta_9 + \Theta_{10} \right) +  (n/e)^n\E \left[   {T}^{n+1}_{e,\ell}   \right]/\alpha_\ell^n \right) + \sum_{\ell=1}^{J}\Theta_{11} \lambda_\ell.
\end{align*}
By letting $\kappa$ go to infinity and utilizing the monotone convergence theorem, we achieve a result analogous to \eqref{eq: result S1}, confirming that $\E_{\pi}[ (\cc^{k} u' {Z}^{\uu} )^n ]$ is uniformly bounded. Consequently,  $\E_{\pi}[ (\cc^{k}  {Z}^{\uu}_k )^n   ] $ is uniformly bounded.

\subsection{Extension to Non-integer Cases}\label{sec: beta general}
We extend Theorem \ref{thm:Ckn} to the case where $M$ is a real number in $[1,\infty)$.

\begin{theorem}\label{thm:general}
    Given an integer $M\geq 1$ and a constant $\varepsilon\in (0,1)$, suppose the following moments exist for the unitized times:  
    \begin{equation}\label{eq: moment condition general}
        \E\left[T_{e,j}^{ M+1+\varepsilon  }\right]<\infty  \text{ for } 1\leq j \leq M\wedge J, \text{ and }~\E\left[T_{s,j}^{ M+1+\varepsilon}\right]<\infty\text{ for  }1\leq j \leq J.
    \end{equation}
    Then, for each $1\leq k\leq M\wedge J$, there exists a positive constant $C_{k}<\infty$ such that for all $\cc\in(0,r_0)$,
    \begin{equation*}
        \E_\pi\left[ \left( \cc^k Z_{k}^\uu  \right)^{\beta}\right] \leq C_{k}, 
    \end{equation*}
    where $\beta\equiv M+\varepsilon/(M+\varepsilon)$ and $r_0$ is a constant $\in (0,1)$, specified in Lemma \ref{lmm: eta}.
\end{theorem}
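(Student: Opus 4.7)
The plan is to perform one additional BAR step on top of Theorem~\ref{thm:Ckn}, exploiting the fact that \eqref{eq: moment condition general} is strictly stronger than \eqref{eq: moment condtion}. First, since $M + 1 + \varepsilon > M + 1$, Theorem~\ref{thm:Ckn} applies directly and yields, uniformly over $r \in (0, r_0)$, bounds on $\E_\pi[(r^k Z_k^{(r)})^n]$ for all integers $0 \leq n \leq M$ and all $1 \leq k \leq M \wedge J$, together with the Palm bounds (\hyperlink{S2}{S2})--(\hyperlink{S4}{S4}). In addition, Lemma~6.4 of \citet{BravDaiMiya2023} (whose proof extends to real exponents) with hypothesis \eqref{eq: moment condition general} gives $\sup_r \E_\pi[(R_{e,j}^\uu)^{M+\varepsilon}] < \infty$ and the analogous bound for $R_{s,j}^\uu$; rerunning the base step of Section~\ref{subsec: bounds for residual times} with $\psi_{M+\varepsilon}$ replacing $\psi_M$ then extends the Palm residual-time bound (\hyperlink{S4}{S4}) at $n = 0$ from exponent $M$ to $M + \varepsilon$.

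For each fixed $k \in \{1, \ldots, M \wedge J\}$, I would then apply BAR \eqref{eq:BAR} with the real-exponent analogue of \eqref{eq:funcS1o},
\begin{equation*}
    f_{k,\beta}(x) = \frac{1}{\beta + 1}\, r^{k(\beta - 1)} \left(u_k^\T z\right)^{\beta + 1} + r^{k(\beta - 1)} \left(u_k^\T z\right)^{\beta} h_k(r_e, r_s),
\end{equation*}
realised through the soft- and hard-truncation scheme of Section~\ref{sec: truncation} so that the resulting test function lies in $\D$. The calculation mirrors Section~\ref{subsec: proof of S1} essentially verbatim: Lemma~\ref{lmm: h simplify} yields the lower bound
$-\E_\pi[\A f_{k,\beta}(X^\uu)] \geq \frac{1}{J}(1 - w_{kk}) \E_\pi[(r^k u_k^\T Z^\uu)^{\beta}] - \Theta'$, where $\Theta'$ absorbs the $\I{Z_k^\uu = 0}$ correction and is controlled via Hölder's inequality exactly as in \eqref{eq:indicator}. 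The mean value theorem, which is valid for real exponents, applied to the Palm jump terms on the right-hand side produces remainder terms of polynomial order at most $\beta - 1 < M$ in $u_k^\T Z^\uu$, sometimes multiplied by $|h_k|$ or by a residual time.

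Each such remainder is bounded by Hölder's inequality with conjugate exponents $p = M/(\beta - 1)$ and $q = M/(M - \beta + 1)$. A direct computation gives $M - \beta + 1 = M/(M + \varepsilon)$, so $q = M + \varepsilon$: the factor $(u_k^\T Z^\uu)^{p(\beta - 1)} = (u_k^\T Z^\uu)^M$ is uniformly bounded by Theorem~\ref{thm:Ckn}, while the factor $|h_k|^q$ or $\psi_1^q$ is bounded by the augmented Palm estimates on $\E_{e,\ell}[\psi_{M+\varepsilon}]$ and $\E_{s,\ell}[\psi_{M+\varepsilon}]$ from the first paragraph (after an elementary power-mean inequality as in \eqref{eq: cross term bound}). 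Summing, rearranging, and letting the truncation parameter $\kappa \to \infty$ via monotone convergence then yields the desired bound on $\E_\pi[(r^k u_k^\T Z^\uu)^\beta]$ and hence on $\E_\pi[(r^k Z_k^\uu)^\beta]$ since $u_k^\T z \geq z_k$. The main obstacle is the Hölder bookkeeping together with the extension of the base-step Palm residual-time moments from exponent $M$ to $M + \varepsilon$; the calibration $\beta = M + \varepsilon/(M + \varepsilon)$ is designed precisely so that the conjugate exponent $q$ equals the available residual-time moment order $M + \varepsilon$, and no further inductive step on $k$ or $n$ is required beyond the single real-exponent application.
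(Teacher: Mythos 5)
Your overall route is the paper's: apply Theorem \ref{thm:Ckn} under the stronger moment condition, upgrade the base-step residual-time bounds (under $\pi$ and under the Palm measures) from exponent $M$ to $M+\varepsilon$ via Lemma~6.4 of \citet{BravDaiMiya2023} and a rerun of the $n=0$ argument with $\psi_{M+\varepsilon}$, then rerun the (\hyperlink{S1}{S1}) computation with real exponent $\beta$ and control the order-$(\beta-1)$ remainders by H\"older with conjugate exponents $(M+\varepsilon)/(M+\varepsilon-1)$ and $M+\varepsilon$ (the paper packages exactly this H\"older step as Proposition \ref{prop:general} and Lemma \ref{lmm: general}; your calibration $(\beta-1)\cdot\frac{M+\varepsilon}{M+\varepsilon-1}=M$ matches \eqref{eq: proof S3 general}). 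The truncation and monotone-convergence finish is also as in Section \ref{sec: truncation}.

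There is, however, one genuine gap: your closing claim that ``no further inductive step on $k$\,\dots\,is required.'' The indicator correction cannot be handled ``exactly as in \eqref{eq:indicator}'' without new input. With $n=\beta$ that bound reads
\begin{equation*}
\E_{\pi}\Bigl[\cc^{k(\beta-1)}\bigl(\b{k}^\T Z^{\uu}\bigr)^{\beta}\mu_k^\uu\I{Z_k^\uu=0}\Bigr]\le (\lambda_k+1)\,\E_{\pi}\Bigl[\bigl(\cc^{k-1}\textstyle\sum_{j=1}^{k-1}\bb{k}{j}Z_j^\uu\bigr)^{\beta\vee k}\Bigr]^{\beta/(\beta\vee k)},
\end{equation*}
and since $\beta>M\ge k$ this requires the $\beta$-th moments of $\cc^{j}Z_j^{\uu}$ for the lighter-traffic stations $j<k$ --- precisely the conclusion of the theorem at those stations, which Theorem \ref{thm:Ckn} does not supply (it stops at integer order $M<\beta$). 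Nor can you trade down to order-$M$ moments by a different H\"older split: to keep the exponent on $\sum_{j<k}\bb{k}{j}Z_j^\uu$ at or below $M$ you would need a H\"older parameter $s\le M/\beta<1$, and the surplus powers of $r$ coming from $\PP(Z_k^\uu=0)=r^k/\mu_k^\uu$ do not close this deficit. The paper resolves this by running an outer induction on the station index $k$ (see the proof of Theorem \ref{thm:general}): assume $\E_\pi[(\cc^jZ_j^\uu)^\beta]$ is uniformly bounded for $j\le k-1$ (the case $k=1$ being vacuous since $\b{k}^\T Z^\uu=0$ on $\{Z_1^\uu=0\}$), and use that hypothesis in the indicator term. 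Your argument is correct once you reinstate this induction on $k$; everything else you wrote survives unchanged.
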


To prove Theorem \ref{prop:general},  it is essential to introduce the following proposition, which aligns directly with Statements (\hyperlink{S1}{S1})-(\hyperlink{S4}{S4}) in Section \ref{subsec: test functions } by replacing $M$ with $\beta$ and substituting $n$ with $\beta-1$.
\begin{proposition}\label{prop:general}
    Under the moment condition \eqref{eq: moment condition general}, there exist positive and finite constants $C_{k,\beta-1}$, $D_{k,\beta-1}$, $E_{k,\beta-1}$ and $F_{k,\beta-1}$ independent of $\cc$ such that the following statements hold for all $1\leq k\leq M\wedge J$ and $\cc\in (0,r_0)$: 
    \begin{enumerate}
        \item[\hypertarget{1}{(1)}] ~$
        \E_{\pi}\left[\left(\cc^kZ_k^\uu\right)^{\beta-1}\right]\leq C_{k,\beta-1}$;
    \item[\hypertarget{2}{(2)}] 
    $\sum\limits_{\ell=1}^{M\wedge J}\E_{e,\ell}\left[\left(\cc^kZ_k^\uu\right)^{\beta-1}\right]+ \sum\limits_{\ell=1}^J\E_{s,\ell}\left[\left(\cc^kZ_k^\uu\right)^{\beta-1}\right] \leq D_{k,\beta-1}$;
    \item[\hypertarget{3}{(3)}] 
        ~$\E_{\pi} \left[ \left( \cc^k Z_{k}^{\uu}\right)^{\beta-1}\psi_{1}\left(R_e^{(r)},R_s^{(r)}\right) \right] \leq E_{k,\beta-1}$;
    \item[\hypertarget{4}{(4)}]  
    $\sum\limits_{\ell=1}^{M\wedge J} \E_{e,\ell}\left[ \left( \cc^k Z_{k}^{\uu}\right)^{\beta-1} \psi_{1}\left(R_e^{(r)},R_s^{(r)}\right)\right]+\sum\limits_{\ell=1}^J \E_{s,\ell}\left[ \left( \cc^k Z_{k}^{\uu}\right)^{\beta-1} \psi_{1}\left(R_e^{(r)},R_s^{(r)}\right)\right] \leq F_{k,\beta-1}$;
    \end{enumerate}
    where $\psi_{1}$ is given in \eqref{eq: function psi}.
\end{proposition}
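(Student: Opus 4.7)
The plan is to establish Proposition \ref{prop:general} by replaying the mathematical induction of Sections \ref{subsec: bounds for residual times}--\ref{subsec: S4} at the single fractional exponent $\beta-1$ (with $M$ replaced by $\beta$ in the design of the test functions). Because $\beta-1<M$, Theorem \ref{thm:Ckn} and the intermediate Statements (\hyperlink{S1}{S1})--(\hyperlink{S4}{S4}) are already at our disposal at every integer order $m\in\{0,1,\ldots,M\}$ under the strengthened moment condition \eqref{eq: moment condition general}; these serve as a reservoir of external hypotheses to absorb any integer-order term that arises in the expansions, so the only genuinely new work concerns the four statements at the fractional exponent.

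The test functions I would substitute into BAR \eqref{eq:BAR} are the fractional analogues of \eqref{eq:funcS1o} and \eqref{eq:funcS2o}--\eqref{eq:funcS4o}:
\begin{align*}
    f_{k}(x) &= \tfrac{1}{\beta}\, r^{k(\beta - 2)}\bigl(\b{k}^{\T} z\bigr)^{\beta} + r^{k(\beta - 2)}\bigl(\b{k}^{\T} z\bigr)^{\beta - 1} h_k(r_e, r_s),\\
    f_{k,D}(x) &= \bigl(r^k z_k\bigr)^{\beta - 1} \psi_1(r_e, r_s),\\
    f_{k,E}(x) &= \bigl(r^k z_k\bigr)^{\beta - 1} \psi_{2}(r_e, r_s),\\
    f_{k,F}(x) &= \bigl(r^k z_k\bigr)^{\beta - 1} \psi_1(r_e, r_s)^2,
\end{align*}
each replaced by its truncated counterpart as in Section \ref{sec: truncation}: polynomials in $z$ via $g^{(\kappa)}_{\beta-1}(z)=z^{\beta-1}\exp(-z/\kappa)$ and its primitive $G^{(\kappa)}_{\beta-1}$, with residual times hard-truncated at level $\kappa$ inside the $\psi_n$ and $h_k$ factors. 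The outer induction would run over $k=1,\ldots,M\wedge J$, and within each $k$ the four statements are derived in the order (\hyperlink{1}{1}) $\to$ (\hyperlink{2}{2}) $\to$ (\hyperlink{3}{3}) $\to$ (\hyperlink{4}{4}). For (\hyperlink{1}{1}), computing $\A f_k$ and invoking Lemmas \ref{lmm: eta} and \ref{lmm: h simplify} would yield a principal term proportional to $\E_\pi[(r^k \b{k}^{\T} Z^\uu)^{\beta-1}]$ on the LHS of BAR, while the jump differences on the RHS expand via the mean value theorem into remainders of order $(\b{k}^{\T} z)^{\beta-2}$ paired with one-step increments or residual times; these are controlled by the lower-$k$ statements already established in the induction, combined with the integer bounds from Theorem \ref{thm:Ckn} and the moment condition \eqref{eq: moment condition general}.

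The main obstacle will be the rigorous handling of the fractional exponent inside the Taylor-style expansions and the truncation, especially in the regime $\beta-1<1$ (which occurs when $M=1$), where $g^{(\kappa)}_{\beta-1}$ is no longer Lipschitz at the origin. The analogue of Lemma \ref{lmm: g properties}(ii) must be stated as a two-regime bound: for any $z\geq 0$ and $c\geq -z$,
\begin{align*}
    \bigl| g^{(\kappa)}_{\beta - 1}(z + c) - g^{(\kappa)}_{\beta - 1}(z)\bigr|
    \leq C_\beta \min\bigl\{|c|^{\beta - 1},\; |c|\,(z + |c| + 1)^{\beta - 2}\bigr\},
\end{align*}
together with the integrated version $G^{(\kappa)}_{\beta - 1}(z + c) - G^{(\kappa)}_{\beta - 1}(z) \leq c(z + c)^{\beta - 1} \exp(-z/\kappa)$. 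The $|c|^{\beta-1}$ branch handles small $z$ and is integrable against the jump moments provided by \eqref{eq: moment condition general}, while the $(z+|c|+1)^{\beta-2}$ branch plugs directly into the integer-order Statements (\hyperlink{S1}{S1})--(\hyperlink{S4}{S4}) already established in Theorem \ref{thm:Ckn}. A secondary technical point is that the truncation error analogue of \eqref{eq:RHS1 time} generates residual-time moments of order $\beta$, finite precisely because \eqref{eq: moment condition general} provides $(M+1+\varepsilon)$-th moments and $\beta=M+\varepsilon/(M+\varepsilon)<M+1+\varepsilon$. After sending $\kappa\to\infty$ by monotone convergence as in Section \ref{sec: truncation}, Statements (\hyperlink{1}{1})--(\hyperlink{4}{4}) follow with explicit finite constants.
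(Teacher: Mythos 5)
Your route is genuinely different from the paper's, and much heavier than necessary. The paper does not redo any BAR induction at the fractional exponent: it observes that $\beta-1$ was engineered so that H\"older's inequality with conjugate exponents $\tfrac{M+\varepsilon}{M+\varepsilon-1}$ and $M+\varepsilon$ gives
\begin{equation*}
\E_{\pi}\left[\left(\cc^k Z_k^\uu\right)^{\beta-1}\psi_1\right]\le \E_{\pi}\left[\left(\cc^k Z_k^\uu\right)^{M}\right]^{1-\frac{1}{M+\varepsilon}}\E_{\pi}\left[\psi_1^{M+\varepsilon}\right]^{\frac{1}{M+\varepsilon}},
\end{equation*}
since $(\beta-1)\cdot\tfrac{M+\varepsilon}{M+\varepsilon-1}=M$ exactly. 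Statements (\hyperlink{1}{1})--(\hyperlink{2}{2}) then follow from (\hyperlink{S1}{S1})--(\hyperlink{S2}{S2}) at $n=M$ because $\beta-1<M$, and (\hyperlink{3}{3})--(\hyperlink{4}{4}) follow from the display above (and its Palm-measure version) once $\E_{\pi}[\psi_{M+\varepsilon}]$ and $\sum_\ell\E_{e,\ell}[\psi_{M+\varepsilon}]+\sum_\ell\E_{s,\ell}[\psi_{M+\varepsilon}]$ are bounded; that is Lemma \ref{lmm: general}, whose proof is just the $n=0$ base step with $M$ replaced by $M+\varepsilon$ and is the only place the extra $\varepsilon$ in \eqref{eq: moment condition general} is spent. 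So all the fractional-exponent Taylor/truncation machinery you develop is avoidable.

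Beyond being laborious, your route has a concrete gap when $M=1$. Your test functions $f_{k,E}=(r^kz_k)^{\beta-1}\psi_2$ and $f_{k,F}=(r^kz_k)^{\beta-1}\psi_1^2$ force you, on the right-hand side of BAR, to control pre-jump quantities like $\E_{e,\ell}[\psi_2]$ and $\E_{s,\ell}[\psi_2]$ (these multiply the increment of the $z$-part, which your two-regime bound only shrinks to $O(r^{k(\beta-1)})$, not to zero). For $M\ge 2$ one has $\psi_2\le 2J+\psi_M$ and (\hyperlink{S4}{S4}) at $n=0$ applies, but for $M=1$ the condition \eqref{eq: moment condition general} only yields control of $\psi_{1+\varepsilon}$ under the Palm measures (Lemma \ref{lmm: general}), and $\psi_2$ is not dominated by $1+\psi_{1+\varepsilon}$; second moments of the residual times under the Palm measures would require roughly third moments of the unitized times, which are not assumed. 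The paper's interpolation argument never needs $\psi$ beyond order $M+\varepsilon$, which is precisely what the hypothesis supplies. If you insist on the direct induction, you would have to replace $\psi_2$ by $\psi$ at an order no larger than $1+\varepsilon$ (and restructure the auxiliary statements accordingly), at which point you are effectively rebuilding the interpolation.
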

\begin{proof}[Proof of Theorem \ref{thm:general}]
    The proof utilizes the mathematical induction on station $k$ ranging from $1$ to $M\wedge J$. Specifically, for station $k$, we assume that $\E_\pi[ ( \cc^j Z_{j}^\uu  )^{\beta}]$ are uniformly bounded for all $r\in(0,r_0)$ and stations $1\leq j\leq k-1$. Under this assumption, the proof of Theorem \ref{thm:general} for station $k$ parallels the approach in Section \ref{subsec: proof of S1}, with only two changes: substituting $n$ with $\beta$ and replacing Statements (\hyperlink{S1}{S1})-(\hyperlink{S4}{S4}) with Statements (\hyperlink{1}{1})-(\hyperlink{4}{4}).
\end{proof}
To prove Proposition \ref{prop:general}, we need to introduce the following lemma, which directly corresponds to the base step of Statements (\hyperlink{S3}{S3})-(\hyperlink{S4}{S4}) when $n=0$.
\begin{lemma}\label{lmm: general}
    Under the moment condition \eqref{eq: moment condition general}, there exist positive and finite constants $A_1$ and $A_2$ independent of $\cc$ such that the following statements hold for all $1\leq k\leq M\wedge J$ and $\cc\in (0,r_0)$: 
    \begin{align}
        &\E_{\pi} \left[\psi_{M+\varepsilon}\left(R_e^{(r)},R_s^{(r)}\right) \right] \leq A_1 \label{eq: S3 general}\\
        &\sum\limits_{\ell=1}^{M\wedge J} \E_{e,\ell}\left[  \psi_{M+\varepsilon}\left(R_e^{(r)},R_s^{(r)}\right)\right]+\sum\limits_{\ell=1}^J \E_{s,\ell}\left[  \psi_{M+\varepsilon}\left(R_e^{(r)},R_s^{(r)}\right)\right] \leq A_2\label{eq: S4 general}
    \end{align}
\end{lemma}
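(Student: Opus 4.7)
The plan is to mirror the base-step proof of Statements (\hyperlink{S3}{S3}) and (\hyperlink{S4}{S4}) at $n=0$ given in Section \ref{subsec: bounds for residual times}, with the integer exponent $M$ replaced throughout by the real exponent $M+\varepsilon$. Both target inequalities are uniform-in-$r$ bounds on the $(M+\varepsilon)$-th moment of the residual times under the stationary and Palm distributions, and the arguments in Section \ref{subsec: bounds for residual times} use the integrality of $M$ only for notational convenience, so the same derivations transfer once the binomial-type manipulations are replaced by their real-exponent analogues.

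For \eqref{eq: S3 general}, I would invoke the $(M+\varepsilon)$-moment version of Lemma~6.4 of \cite{BravDaiMiya2023}, whose proof does not depend on integrality and requires only $\E[T_{e,j}^{M+1+\varepsilon}]<\infty$ and $\E[T_{s,j}^{M+1+\varepsilon}]<\infty$, which is exactly what is provided by \eqref{eq: moment condition general}. This yields $\sup_{r\in(0,1)}\E_\pi[(R_{e,j}^\uu)^{M+\varepsilon}]<\infty$ and $\sup_{r\in(0,1)}\E_\pi[(R_{s,j}^\uu)^{M+\varepsilon}\I{Z_j^\uu>0}]<\infty$. Combining with the idle-station bound $\E_\pi[(R_{s,j}^\uu)^{M+\varepsilon}\I{Z_j^\uu=0}]\le\E[T_{s,j}^{M+\varepsilon}]/\lambda_j^{M+\varepsilon}$, which follows from Lemma \ref{lmm: independent} exactly as in \eqref{eq: E10}, gives \eqref{eq: S3 general} with some finite constant $A_1$.

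For \eqref{eq: S4 general}, I would apply BAR \eqref{eq:BAR} with the test function $f(x)=\psi_{M+\varepsilon}(r_e,r_s)\,\psi_1(r_e,r_s)$, the natural analogue of \eqref{eq:func base}. Computing $-\E_\pi[\A f(X^\uu)]$ produces, after invoking the cross-term bound $\psi_{M+\varepsilon-1}\psi_1\le 4J^2\psi_{M+\varepsilon}$ (identical in form to \eqref{eq: cross term bound} and valid for any real exponent), an upper bound in terms of the constant $A_1$ just obtained. The Palm-expectation terms on the right-hand side are then handled exactly as in \eqref{eq: base step RHS 1} and \eqref{eq: base step RHS 2}: expanding the products, discarding the positive cross terms involving $(T_{e,\ell}/\alpha_\ell)^{M+\varepsilon}$ and $(T_{s,\ell}/\mu_\ell^\uu)^{M+\varepsilon}$ (whose expectations are finite by \eqref{eq: moment condition general} but unneeded for the lower bound), and applying Lemma \ref{lmm: independent} together with $\mu_\ell^\uu\le\lambda_\ell+1$ from Assumption \ref{assmpt: multiscale}, yield positive lower bounds of the shape $\E_{e,\ell}[\psi_{M+\varepsilon}]/\alpha_\ell$ and $\E_{s,\ell}[\psi_{M+\varepsilon}]/(\lambda_\ell+1)$, respectively. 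Rearrangement then produces \eqref{eq: S4 general} with some finite $A_2$ depending only on $A_1$, $J$, $M+\varepsilon$, and the rates $\{\alpha_\ell,\lambda_\ell\}_{\ell\in\J}$.

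The one subtlety, exactly parallel to Section \ref{sec: truncation}, is that $f$ above is unbounded and hence not in $\D$. To make the argument rigorous I would apply BAR first to the truncated version $f^\fk(x)=\psi_{M+\varepsilon}^\fk(r_e,r_s)\,\psi_1^\fk(r_e,r_s)\in\D$, and then let $\kappa\to\infty$ via monotone convergence. The extra tail terms produced by the hard truncation of the residual times are controlled by the $(M+1+\varepsilon)$-th moments of the unitized interarrival and service times exactly as in \eqref{eq:RHS1 time}; this is precisely the place where the strengthened moment condition $M+1+\varepsilon$ (rather than $M+1$) is actually used. Beyond this routine bookkeeping and the real-exponent replacement of the binomial expansions by their mean-value-theorem analogues, I do not expect any genuinely new difficulty in the proof.
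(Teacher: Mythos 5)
Your proposal is correct and follows essentially the same route as the paper: \eqref{eq: S3 general} via the $(M+\varepsilon)$-moment version of Lemma~6.4 of \citet{BravDaiMiya2023} plus the idle-station decomposition of \eqref{eq: E10}, and \eqref{eq: S4 general} by running the base-step BAR argument with the test function \eqref{eq:func base} with $M$ replaced by $M+\varepsilon$. Your explicit treatment of the truncation and of where the $(M+1+\varepsilon)$-th moment condition enters is consistent with how the paper handles these points in Section \ref{sec: truncation}.
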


\begin{proof}[Proof of Lemma \ref{lmm: general}]
    The proof of Lemma \ref{lmm: general} follows the same approach as presented in Section \ref{subsec: bounds for residual times}. Specifically, \eqref{eq: S3 general} follows from Lemma~6.4 of \citet{BravDaiMiya2023} and the derivation of \eqref{eq: E10}. Furthermore, the proof of \eqref{eq: S4 general} can directly adopt the proof approach of Statement (\hyperlink{S4}{S4}) in Section~\ref{subsec: bounds for residual times} by replacing $M$ with $M+\varepsilon$ in the test function \eqref{eq:func base} and its subsequent derivation.
\end{proof}

\begin{proof}[Proof of Proposition \ref{prop:general}]
    Since the moment condition \eqref{eq: moment condition general} implies \eqref{eq: moment condtion}, Statements (\hyperlink{S1}{S1}) and (\hyperlink{S2}{S2}) hold for $n=M$, and hence, Statements (\hyperlink{1}{1}) and (\hyperlink{2}{2}) also hold due to $\beta-1<M$. 
    Statement (\hyperlink{3}{3}) can be established as follows:
    \begin{equation} \label{eq: proof S3 general}
        \E_{\pi}\left[ \left( \cc^k Z_{k}^{\uu}\right)^{\beta-1} \psi_{1}\right] \leq \E_{\pi}\left[ \left( \cc^k Z_{k}^{\uu}\right)^{M} \right]^{1-\frac{1}{M+\varepsilon}} \E_{\pi}\left[ \psi_{1}^{M+\varepsilon}\right]^{\frac{1}{M+\varepsilon}} \leq C_{k,M}^{1-\frac{1}{M+\varepsilon}}\cdot 2J A_1^{\frac{1}{M+\varepsilon}},
    \end{equation}
    where the first inequality holds due to Hölder's inequality, and the second inequality follows from Lemma \ref{lmm: general} and the fact that $\psi_{1}^{M+\varepsilon}  \leq 2J ( R_{\max}^\uu )^{M+\varepsilon}  \leq  2J \psi_{M+\varepsilon}$ with $R_{\max}^\uu\equiv \max(\{R_{e,j}^{(r)}: 1\leq j\leq M\wedge J\}\cup\{R_{s,j}^{(r)}: 1\leq j\leq J\})$. Consequently, $E_{k,\beta-1}$ can be set as $2JC_{k,M}^{1-\frac{1}{M+\varepsilon}} A_1^{\frac{1}{M+\varepsilon}}$. Statement (\hyperlink{4}{4}) follows the same argument as \eqref{eq: proof S3 general} by replacing $\pi$ to any Palm measure $\nu$ in $\{\nu_{e,\ell};1\leq \ell \leq M\wedge J\}\cup \{\nu_{s,\ell};\ell\in \J\}$:
    \begin{equation*}
        \E_{\nu}\left[ \left( \cc^k Z_{k}^{\uu}\right)^{\beta-1} \psi_{1}\right] \leq \E_{\nu}\left[ \left( \cc^k Z_{k}^{\uu}\right)^{M} \right]^{1-\frac{1}{M+\varepsilon}} \E_{\nu}\left[ \psi_{1}^{M+\varepsilon}\right]^{\frac{1}{M+\varepsilon}} \leq D_{k,M}^{1-\frac{1}{M+\varepsilon}}\cdot 2J A_2^{\frac{1}{M+\varepsilon}}.
    \end{equation*}
    Consequently, $F_{k,\beta-1}$ can be set as $4J^2D_{k,M}^{1-\frac{1}{M+\varepsilon}} A_2^{\frac{1}{M+\varepsilon}}$.
\end{proof}

\bibliographystyle{plainnat} 
\bibliography{reference}

\end{document}